\newtheorem{theorem}{Theorem}[section]
\newtheorem{proposition}[theorem]{Proposition}
\newtheorem{definition}[theorem]{Definition}
\newtheorem{lemma}[theorem]{Lemma}
\newtheorem{corollary}[theorem]{Corollary}
\newtheorem{problem}[theorem]{Problem}
\theoremstyle{remark}
\newtheorem{remark}[theorem]{Remark}
\numberwithin{equation}{section}
\DeclareFontFamily{OT2}{cmr}{\hyphenchar\font45 }
\DeclareFontShape{OT2}{cmr}{m}{n}{<5><6><7><8><9>gen*wncyr<10><10.95><12><14.4><17.28><20.74><24.88>wncyr10}{}
\DeclareFontShape{OT2}{cmr}{b}{n}{<5><6><7><8><9>gen*wncyb<10><10.95><12><14.4><17.28><20.74><24.88>wncyb10}{}
\DeclareMathAlphabet{\mathcyr}{OT2}{cmr}{m}{n}
\DeclareMathAlphabet{\mathcyb}{OT2}{cmr}{b}{n}
\SetMathAlphabet{\mathcyr}{bold}{OT2}{cmr}{b}{n}
\newcommand{\Q}{\mathbb Q}
\newcommand{\Z}{\mathbb Z}
\newcommand{\x}{\mathsf{x}}
\newcommand{\cQ}{\mathcal{Q}}
\newcommand{\cP}{\mathcal{P}}
\newcommand{\cL}{\mathcal{L}}
\newcommand{\cG}{\mathcal G}
\newcommand{\cV}{\mathcal V}
\newcommand{\dm}{\mathfrak{dm}}
\newcommand{\ls}{\mathfrak{ls}}
\DeclareMathOperator{\Ad}{Ad}
\DeclareMathOperator{\ad}{ad}
\DeclareMathOperator{\gr}{gr}
\DeclareMathOperator{\cp}{\underline{\circ}}
\DeclareMathOperator{\Lie}{Lie}
\DeclareMathOperator{\sh}{\mathcyr {sh}}
\DeclareMathOperator{\opast}{\ast}
\title[Ecalle's and Brown's solutions to double shuffle equations]{On Ecalle's and Brown's polar solutions to the double shuffle equations modulo products}
\author{Nils Matthes}
\address{Department of Mathematics, Kyushu University, 744 Motooka, Nishi-ku, Fukuoka, 819-0395, Japan}
\address{Current address (N.M.): Mathematical Institute, University of Oxford, Andrew Wiles Building, Radcliffe Observatory Quarter, Woodstock Road, Oxford, OX2 6GG, United Kingdom \newline 
	E-mail address: {\tt nils.matthes@maths.ox.ac.uk}}
\author{Koji Tasaka}
\address{Department of Information Science and Technology, Aichi Prefectural University, 1522-3 Ibaragabasama, Nagakute-shi, Aichi, 480-1198, Japan \newline
	E-mail address: {\tt tasaka@ist.aichi-pu.ac.jp}}
\subjclass[2010]{11M32 (17B01)}
\keywords{Multiple zeta values, double shuffle Lie algebra}
\date{\today}
\begin{document}

\begin{abstract}
Two explicit sets of solutions to the double shuffle equations modulo products were introduced by Ecalle and Brown respectively. We place the two solutions into the same algebraic framework and compare them. We find that they agree up to and including depth four but differ in depth five by an explicit solution to the linearized double shuffle equations with an exotic pole structure.
\end{abstract}
\maketitle

\section{Introduction}

In this paper, we compare two explicit sets of solutions to the double shuffle equations modulo products introduced by Brown \cite{Brown:Anatomy} and Ecalle \cite{Ecalle:ARI}, respectively. 
One of our main purposes is to put the two constructions on an equal footing, thereby highlighting similarities (and differences) of the two approaches. 
We first explain the relevance of the problem under consideration in the context of multiple zeta values and then state our main results.

\subsection{Motivation}
Multiple zeta values are defined for positive integers $n_1,\ldots,n_r$ with $n_r\geq 2$ by the nested sum
\begin{equation} \label{eqn:mzv}
\zeta(n_1,\ldots,n_r):=\sum_{0<m_1<\cdots<m_r}\frac{1}{m_1^{n_1}\cdots m_r^{n_r}}.
\end{equation}
We call $n_1+\dots+n_r$ the weight and $r$ the depth, and $1\in \Q$ is regarded as the multiple zeta value of weight 0 and depth 0. 
One of the ultimate goals of the study of multiple zeta values is to find a presentation of the $\Q$-algebra $\mathcal{Z}$ generated by all multiple zeta values, or equivalently, to describe all $\Q$-algebraic relations among the numbers \eqref{eqn:mzv}. This problem is related to explicitly describing the action of the Tannakian fundamental group of the category of mixed Tate motives over $\Z$ on the unipotent fundamental group of $\mathbb P^1 \setminus \{0,1,\infty\}$ \cite{Brown:Anatomy,DeligneGoncharov:MTM}.
For more details, as well as a relation with the Grothendieck--Teichm\"uller group \cite{Drinfeld} and the Kashiwara--Vergne group \cite{AlT}, we refer to \cite{Furusho:around}.

Now recall the dimension conjecture on $\mathcal{Z}$.
Let $\mathcal{Z}_k \subset \mathcal{Z}$ be the $\Q$-vector subspace spanned by all multiple zeta values of weight $k$. 
We have $\mathcal{Z}_0=\Q$, $\mathcal{Z}_1=\{0\}$ and it is easily seen that $\dim_{\Q}\mathcal{Z}_k \leq 2^{k-2}$, for $k\geq 2$. 
In \cite{Zagier}, Zagier proposed the conjecture stating that $\dim_{\Q}\mathcal{Z}_k\stackrel{?}{=}d_k$, where the $d_k$ are defined by the generating series $\sum_{k=0}^{\infty}d_kt^k=\frac{1}{1-t^2-t^3}$. 
Goncharov \cite{Goncharov1} and Terasoma \cite{Terasoma} proved independently that $\dim_{\Q}\mathcal{Z}_k \leq d_k$, i.e. the numbers $d_k$ give at least an upper bound for $\dim_{\Q}\mathcal{Z}_k$. 
Since $d_k\approx 0.4115\ldots \times (1.3247\ldots)^k$, this shows that there are in fact numerous $\Q$-linear relations among the numbers \eqref{eqn:mzv}.
Several classes of such relations are known, such as motivic relations \cite{Brown:MTM}, associator relations \cite{Furusho:dsh} and (regularized) double shuffle relations \cite{IKZ} which, by results of Drinfeld, Goncharov and Furusho, are related as follows (cf. \cite{Furusho:around})
\begin{equation} \label{eqn:rels}
\{\mbox{regularized double shuffle relations}\} \subset \{\mbox{associator relations}\} \subset \{\mbox{motivic relations}\}.
\end{equation}
Moreover, it is known that the inequality $\dim_{\Q}\mathcal{Z}_k \leq d_k$ can be attained using motivic relations only \cite{Brown:MTM}. However, the definition of the motivic relations is rather technical. Giving another proof of the inequality $\dim_\Q \mathcal{Z}_k \le d_k$ using a different class of relations is not only a very challenging project, but also particularly important in connection with motive theory since it might yield a more elementary characterization of the motivic relations.

\subsection{Problem}
In this paper, we focus on the regularized double shuffle relations which can be described as certain functional equations for polynomials in the ring $\Q\langle \x_0,\x_1\rangle$ of non-commutative polynomials in two variables $\x_0,\x_1$. 
Denote by $\mathfrak{dmr}_0$ the subspace of $\Q\langle \x_0,\x_1\rangle$ consisting of solutions to the regularized double shuffle equations modulo products and $\zeta(2)$ (see \cite[\S\S3.3.1-2]{Racinet} or \cite[Appendix A]{Furusho:dsh} for the definition). 
It is graded by the weight, where $\x_0,\x_1$ both have weight $-1$. 
The space $\mathfrak{dmr}_0$ is a graded Lie algebra under the Ihara bracket (cf. \cite[Proposition 4.A.i]{Racinet}).
In this paper, following \cite{Brown:Anatomy,Ecalle:ARI,IKZ}, we will embed $\mathfrak{dmr}_0$ into the space $\cP:=\bigoplus_{r\geq 0}\Q[x_1,\ldots,x_r]$
of finite sequences of polynomials and always consider $\mathfrak{dmr}_0$ as a subspace of $\cP$ under this embedding. For example, the element $[\x_0,[\x_0,\x_1]]+[\x_1,[\x_1,\x_0]] \in \mathfrak{dmr}_0$ corresponds to the sequence $(0,x_1^2,-2x_1+x_2,0,\ldots)$.

Since the numbers \eqref{eqn:mzv} satisfy the regularized double shuffle relations there is a non-canonical surjection of $\Q$-algebras 
\[
U(\mathfrak{dmr}_0)^{\vee} \otimes_{\Q} \Q[\zeta(2)] \rightarrow \mathcal{Z},
\]
which is believed to be an isomorphism (cf. \cite[Conjecture 1]{IKZ}).
Here, we denote by $U(\mathfrak{dmr}_0)^{\vee}$ the graded dual of the universal enveloping algebra of $\mathfrak{dmr}_0$. 
We ask for a presentation of the Lie algebra $\mathfrak{dmr}_0 $ in terms of explicit generators and relations. By a result of Goncharov \cite{Goncharov2}, we know that $\mathfrak{g}^{\mathfrak{m}} \subset \mathfrak{dmr}_0$ where $\mathfrak{g}^{\mathfrak{m}}$ denotes the (image in $\cP$ of the) motivic Lie algebra (see e.g. \cite[Definition 2.3]{Brown:depthgraded} for the definition) and Brown \cite{Brown:MTM} proved that $\mathfrak{g}^{\mathfrak{m}}$ is freely generated by non-canonical elements $\sigma_{2k+1} $ of weight $-2k-1\ (k\ge1)$. 
It is expected that the equality $\mathfrak{g}^{\mathfrak{m}}\stackrel{?}{=}\mathfrak{dmr}_0$ holds. A positive solution to this would show that the inequality $\dim_{\Q}\mathcal{Z}_k \leq d_k$ can be attained using the regularized double shuffle relations only. 
Another consequence would be that all inclusions in \eqref{eqn:rels} are actually equalities which would yield in particular a more elementary characterization of the motivic relations.
Therefore, one might expect to obtain explicit formulas for a choice of the generators $\sigma_{2k+1} $ by solving the regularized double shuffle equations.

So far, no explicit formula for $\sigma_{2k+1}$ is known except for the canonical elements $\sigma_3,\sigma_5,\sigma_7,\sigma_9$; the first example is $\sigma_3:= (0,x_1^2,-2x_1+x_2,0,\ldots)$. Furthermore, it is also known that $\sigma_{2k+1}=(0,x_1^{2k},\ldots)$ for all $k\geq 3$ \cite{Brown:depthgraded,Ihara:Tatetwists}. The problem we are attacking in this paper is as follows (see also \cite[Problem 1]{Brown:depth3}).
\begin{problem} \label{prob:precise}
Find explicit formulas for (some choice of) the $\sigma_{2k+1}$ by solving the (regularized) double shuffle equations.
\end{problem}

\subsection{Polar solutions of Ecalle and Brown and our main results}

In \cite{Brown:Anatomy}, Brown introduced a family of explicit solutions to the double shuffle equations modulo products \eqref{eqn:dm} in all odd weights $\leq -3$, and applied it to the study of $\sigma_{2k+1}$. A similar construction was made by Ecalle \cite{Ecalle:ARI}, and our main result concerns a comparison of the two approaches.

The solutions constructed by Brown and Ecalle are elements of a certain graded vector space
\begin{equation}
\cQ \subset \prod_{r\geq 0}\Q(x_1,\ldots,x_r)
\end{equation}
which contains the polynomial subspace $\cP$ (see \S2 for the precise definition of $\cQ$). In fact the solutions constructed by Ecalle and Brown are not contained in $\cP$; they will be thus called polar solutions. Nevertheless, removing poles, one may well be able to obtain an expression of $\sigma_{2k+1}$ as a kind of ``Taylor expansion" (a suitable linear combination) in terms of their polar solution, so that one can tackle Problem \ref{prob:precise}.
This expression is called ``anatomical" decompositions of zeta elements by Brown \cite[\S11]{Brown:Anatomy}, where a list of anatomical decompositions of $\sigma_{2k+1}$ for $k\le 4$ is provided (see also \S6).

To construct their polar solutions, Ecalle and Brown actually worked on slightly different spaces, so to compare them we need to place their constructions into the same space. For this, the set $\dm_\cQ$ of solutions to the double shuffle equations modulo products in $\cQ$, is introduced in \S2.5.
This enlarges Brown's space $\mathfrak{pdmr}$ defined in \cite[Definition 9.1]{Brown:Anatomy} (see Remark \ref{rmk:dm} for the difference). 
Note that the space $\dm_\cQ$ does not take into account the regularization, corresponding to the ``correction" factor $\psi_{\rm corr}$ in \cite[\S3.3.1]{Racinet}, and hence $\mathfrak{dmr}_0$ is not a subspace of $\dm_\cQ$.
We also consider the homogenized versions of the double shuffle equations modulo products, which are known as the linearized double shuffle equations \cite[\S8]{IKZ}, and denote by $\ls_\cQ \subset \cQ$ the subset of solutions to the linearized double shuffle equations (see Definition \ref{dfn:ls}). The following theorem is basically due to Racinet, Ecalle and Brown:

\begin{theorem}\label{thm:dmls}
The spaces $\dm_\cQ$ and $\ls_\cQ$ are Lie algebras under the Ihara bracket $\{\phantom{\cdot} ,\phantom{\cdot}\}$.
\end{theorem}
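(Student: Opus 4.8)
The plan is to reduce the statement to the corresponding results of Racinet, Ecalle and Brown, the only genuinely new point being that their arguments survive the passage to the enlarged space $\cQ$. Recall that the Ihara bracket is the antisymmetrization $\{f,g\}=f\cp g-g\cp f$ of the Ihara action $\cp$. My first step would be to check that $\cp$ restricts to a well-defined operation $\cQ\times\cQ\to\cQ$ (this is where the precise pole conditions defining $\cQ$ enter): it is a pole-bookkeeping exercise, verifying that the prescribed pole structure is stable under the variable insertions and partial-sum substitutions occurring in the definition of $\cp$. Granting this, the combinatorial identity underlying the Jacobi identity for $\{\,,\}$ (namely that $\cp$ is a pre-Lie product) already holds over $\cP$ and is insensitive to the presence of denominators, so no further work is needed for the Jacobi identity; it remains only to prove closure of $\dm_\cQ$ and of $\ls_\cQ$ under the bracket.

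For $\dm_\cQ$, recall that it is cut out by a first equation of shuffle (alternality) type and a second equation of stuffle type, the latter required only modulo products. That the first equation is preserved by $\{\,,\}$ follows from the compatibility of the Ihara action with the shuffle Hopf algebra structure — i.e.\ the Ihara action preserves the space of primitive (Lie) elements — exactly as in Racinet's proof of \cite[Proposition 4.A.i]{Racinet}; this uses only the combinatorics of shuffles and goes through verbatim in $\cQ$. For the second equation I would adapt the corresponding part of Racinet's argument (equivalently Ecalle's argument in the mould-theoretic setting, or Brown's discussion in \cite[\S9]{Brown:Anatomy}): the point is that the Ihara action is \emph{not} compatible with the stuffle coproduct on the nose, but its defect is a sum of products, which is precisely what ``modulo products'' discards; one writes out these defect terms, checks the cancellation after antisymmetrization, and — the extra ingredient needed here — verifies that no spurious poles outside $\cQ$ are produced along the way.

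For $\ls_\cQ$ the argument is lighter: the linearized double shuffle equations are obtained from the double shuffle equations modulo products by linearization (passage to leading terms), and under linearization the stuffle condition degenerates into a second equation of pure shuffle type. One can therefore either run the argument of the previous paragraph with \emph{both} equations now of shuffle type — so that the easy, Hopf-algebraic half applies to each — or observe that $\ls_\cQ$ arises from $\dm_\cQ$ by passing to an associated graded on which $\{\,,\}$ descends, and deduce its closure from that of $\dm_\cQ$. I would take the former route.

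The main obstacle is the ``stuffle modulo products'' step for $\dm_\cQ$: making precise, in the rational-function setting, the sense in which the Ihara action is stuffle-compatible only up to products, and controlling the pole cancellations so that everything stays inside $\cQ$. The remaining steps are either direct transcriptions of the polynomial-case proofs of Racinet, Ecalle and Brown, or the routine (but necessary) verification that the pole conditions defining $\cQ$ are stable under the relevant operations.
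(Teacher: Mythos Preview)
Your approach differs genuinely from the paper's. Rather than adapting Racinet's argument to $\cQ$, the paper transfers the problem to Ecalle's ari bracket via the operators $\sharp,\flat$: it introduces the subspace $\cV\subset\cL$ of moulds with $f+\varphi(f)=0$, proves that on $\cV$ the ari and Ihara brackets are intertwined by $\flat$ (Proposition~\ref{prop:V}), verifies $\dm_\cQ,\ls_\cQ\subset\cV^\flat$, and then imports from \cite{SalernoSchneps,Schneps:ARI} that $\ls_\cQ^\sharp$ and $\dm_\cQ^\sharp$ are closed under $\{\,,\}_{\rm ari}$ (Theorem~\ref{thm:lie-ls} and Corollary~\ref{cor:lie-dm}). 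Crucially, closure of $\dm_\cQ^\sharp$ is deduced from that of $\ls_\cQ^\sharp$ via the adjoint action of a specific element $P\in\cG$, so Ecalle's route yields the explicit Lie isomorphism $\chi_E:\ls_\cQ\to\dm_\cQ$ as a byproduct --- exactly what the paper needs for its main comparison theorem. Your direct Racinet/Brown route, which the paper acknowledges as an alternative in Remark~\ref{rmk:another-proof}, is more self-contained but does not naturally hand you $\chi_E$.

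Two points to flag. First, $\cQ$ carries \emph{no} pole conditions, only the homogeneity constraint $\deg f^{(r)}=k-r$; the ``pole-bookkeeping exercise'' you describe is therefore vacuous (this is precisely what distinguishes $\dm_\cQ$ from Brown's $\mathfrak{pdmr}$, cf.\ Remark~\ref{rmk:dm}). Second, your treatment of $\ls_\cQ$ is too quick: the two shuffle equations are \emph{not} symmetric with respect to the Ihara bracket. The equation $f^\sharp(w_1\sh w_2)=0$ is the one corresponding to primitivity in $\Q\langle\x_0,\x_1\rangle$ and is preserved by the easy Hopf-algebraic half of Racinet's argument; but the equation $f(w_1\sh w_2)=0$ sits on the stuffle side (it is the leading term of $f(w_1\ast w_2)=0$), and showing that $\{\,,\}$ preserves it requires either the linearization of Racinet's hard computation or --- as the paper does --- passage to the ari bracket, where it becomes alternality of $swap^\flat(f^\sharp)$. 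The easy half does not apply to both.
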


The Ihara bracket will be recalled in \S4.1.
For the proof of Theorem \ref{thm:dmls}, there is another shorter exposition given by Brown (see Remark \ref{rmk:another-proof}), but we repeat quickly Ecalle's theory of moulds \cite{Ecalle:ARI} from \cite{SalernoSchneps,Schneps:ARI}, since it already involves Ecalle's construction of polar solutions.
On this side, another Lie bracket, called the ari bracket $\{\phantom{\cdot} ,\phantom{\cdot}\}_{\rm ari}$ (see \S3.2), comes into play.
We will first show an explicit connection with the Ihara bracket in Proposition \ref{prop:V} and then rephrase Theorem 7.2 of \cite{SalernoSchneps} in our notation.
As a consequence, we obtain the explicit Lie isomorphism (see \S5.1)
\[\chi_E: \ls_\cQ \stackrel{\cong}\longrightarrow \dm_\cQ. \]

Since $\ls_\cQ$ is bigraded by the weight and the depth, it is easier to compute elements in $\ls_\cQ$ for fixed weights and depths. 
For example, we have $x_1^{2k}=(0,x_1^{2k},0,0,\ldots)\in \ls_\cQ$ for all $k\in \Z$.
The map $\chi_E$ is then used to lift these elements to solutions to the double shuffle equations modulo products, thereby constructing explicit elements of $\dm_\cQ$ in all depths.
A crucial feature is that the element $\chi_E(f)\in \dm_\cQ$ in general will have poles, even though $f\in \ls_\cQ$ is a sequence of polynomials.

A similar definition was made by Brown \cite[\S14.2]{Brown:Anatomy}; he defines an injective $\Q$-linear map (see Definition \ref{dfn:constructionChiE})
\[
\chi_B: \ls_\cQ \longrightarrow \cQ,
\]
and announces that $\chi_B(\ls_\cQ) \subset \dm_\cQ$.\footnote{This would actually imply $\chi_B(\ls_{\cQ})=\dm_\cQ$, see the discussion after Theorem \ref{thm:Brown} below.}
Furthermore, the map $\chi_B$ is expected to be a Lie isomorphism, which is still open. In any case, we can apply $\chi_B$ to the depth one elements $x_1^{2k}$ to obtain a second set of explicit solutions to the double shuffle equations modulo products.

We can now state our main result.

\begin{theorem} \label{thm:main}
For any $f  \in \ls_\cQ$, we have
\[ \chi_E(f) \equiv \chi_B(f) \mod \cQ^{(d+4)},\]
where $d$ is minimal such that $f^{(d)}\neq 0$ and $\cQ^{(r)}$ is defined in \eqref{eqn:Q-depth}.
\end{theorem}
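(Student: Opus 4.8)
Both $\chi_E$ and $\chi_B$ are $\Q$-linear, preserve the weight, and (by construction) act as the identity on the lowest-depth component; in particular $\chi_E(f)$ and $\chi_B(f)$ have the same depth-$d$ part, namely $f^{(d)}$. Since $\ls_\cQ$ is bigraded, writing $f=\sum_{j\ge d}f^{(j)}$ and noting that $\chi_E(f^{(j)})-\chi_B(f^{(j)})$ lands in $\cQ^{(j+4)}\subseteq\cQ^{(d+4)}$, it suffices to prove the assertion when $f=f^{(d)}$ is concentrated in a single depth $d$ (and, if convenient, a single weight). Fix such an $f$ and set $\delta:=\chi_E(f)-\chi_B(f)$, an element of $\dm_\cQ$ (using that $\chi_E(f)\in\dm_\cQ$ by the Salerno--Schneps isomorphism of \S5.1, and $\chi_B(f)\in\dm_\cQ$ by Theorem~\ref{thm:Brown}) whose components in all depths $\le d$ vanish. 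The goal is to show $\delta^{(d+1)}=\delta^{(d+2)}=\delta^{(d+3)}=0$.

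The first ingredient is a linearisation principle: the lowest nonzero depth component of any element of $\dm_\cQ$ solves the linearized double shuffle equations, i.e.\ lies in $\ls_\cQ$. This is the very relation between $\dm_\cQ$ and $\ls_\cQ$ underlying Theorem~\ref{thm:dmls} (compare \cite[\S8]{IKZ} for the polynomial case). Hence, if $\delta\ne0$ with lowest nonzero depth $e$, then $\delta^{(e)}\in\ls_\cQ$, and it remains to exclude $e\in\{d+1,d+2,d+3\}$.

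To do this I would compare the two maps order by order in the depth. Brown's map $\chi_B$ (Definition~\ref{dfn:constructionChiE}) is given directly by an iterated-Ihara-bracket recipe, whereas Ecalle's $\chi_E$ is built from the mould operations (swap, ganit, and the ari bracket); Proposition~\ref{prop:V} lets one rewrite the latter in terms of the Ihara bracket on $\cQ$. After this translation both $\chi_E(f)$ and $\chi_B(f)$ take the shape ``$f$ plus depth-raising corrections'', with the depth-$(d+1)$ correction vanishing for structural/parity reasons and the first genuine correction sitting in depth $d+2$. One then checks that the depth-$(d+2)$ and depth-$(d+3)$ corrections produced by the two recipes are literally the same rational function --- equivalently, presenting each map as the exponential of an iterated-bracket operator with infinitesimal generator $\rho_E$ (resp.\ $\rho_B$) supported in depths $\ge2$, one shows that $\rho_E$ and $\rho_B$ coincide through depth $3$. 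The leading discrepancy $\rho_E-\rho_B$ then lives in depth $4$, so $\delta\in\cQ^{(d+4)}$, its leading term being a bracket of $f$ with an explicit depth-$4$ linearized solution of exotic pole structure; identifying that term is the separate finite computation announced in the abstract and carried out in \S6, and is not needed for Theorem~\ref{thm:main}.

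The main obstacle is precisely this low-depth comparison of the two formalisms: one must (i) make both $\chi_E$ and $\chi_B$ fully explicit in the common space $\cQ$ up to depth $d+3$, which forces one to reconcile Ecalle's and Brown's constructions via Proposition~\ref{prop:V}, and (ii) verify that the resulting, a priori rather different looking, expressions agree. A clean way to package (ii) is a uniqueness statement --- that an element of $\dm_\cQ$ with prescribed depth-$d$ part $f$ is uniquely determined through depth $d+3$ --- which amounts to showing that $\ls_\cQ$ contains no nonzero element of depth $d+1$, $d+2$ or $d+3$ (in the relevant weight) with the mild pole structure common to $\chi_E(f)$ and $\chi_B(f)$, the ``exotic'' poles that would obstruct such uniqueness only appearing in depth $4$. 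Establishing this vanishing, or equivalently pushing the explicit bracket computation through depth $d+3$, is the heart of the proof.
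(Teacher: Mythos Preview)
Your broad direction --- translate $\chi_E$ into the Ihara-bracket language via Proposition~\ref{prop:V} and then compare it with $\chi_B$ depth by depth --- is exactly the route the paper takes. But one concrete claim is wrong, and the actual substance of the proof is absent from your sketch.

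The error: you assert that the depth-$(d+1)$ corrections vanish ``for structural/parity reasons'' and that the infinitesimal generators $\rho_E,\rho_B$ are ``supported in depths $\ge 2$''. This is false. One has $\chi_E(f)^{(d+1)}=\{f,(\phi_0^{(1)})^\flat\}$ and $\chi_B(f)^{(d+1)}=\tfrac12\{\psi_0^{(1)},f\}=\{f,-\tfrac12\psi_0^{(1)}\}$, both of which are nonzero in general. They agree because of the \emph{numerical} identity $(\phi_0^{(1)})^\flat=-\tfrac{1}{2x_1}=-\tfrac12\psi_0^{(1)}$, not because of any parity cancellation. This is already the first instance of the mechanism that drives the whole comparison.

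What the paper actually does is the explicit computation you only allude to, and its content lies in two specific identities that you do not mention. Setting $\overline\psi_0^{(r)}:=-\tfrac{1}{2^r}\psi_0^{(r)}$, one verifies directly from \eqref{eqn:phi_0} and \eqref{eqn:psi_0} that $(\phi_0^{(r)})^\flat=\overline\psi_0^{(r)}$ for $r=1,2$; and one checks the extra relation $\phi_0^{(3)}=\tfrac12\{\phi_0^{(2)},\phi_0^{(1)}\}_{\rm ari}$, the counterpart of Brown's Witt identity $\overline\psi_0^{(3)}=\tfrac12\{\overline\psi_0^{(2)},\overline\psi_0^{(1)}\}$ coming from \eqref{eqn:formula-psi0}. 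With these and the Jacobi identity, both $\chi_E(f)^{(d+r)}$ and $\chi_B(f)^{(d+r)}$ for $r\le 3$ reduce to the \emph{same} nested Ihara brackets of $f$ with $\overline\psi_0^{(1)}$ and $\overline\psi_0^{(2)}$, and equality follows. Note in particular that $\chi_B$ is not defined as an exponential; the rewriting of $\chi_B(f)^{(d+3)}$ in exponential-looking form already \emph{uses} \eqref{eqn:formula-psi0}, so your reformulation in terms of $\rho_B$ presupposes part of what has to be checked.

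Finally, the alternative ``uniqueness'' route you propose --- ruling out $\ls_\cQ$-solutions of depth $d+1,d+2,d+3$ with the relevant pole type --- is not what the paper does and is not obviously true; it would need an independent and rather delicate analysis of poles, whereas the direct comparison above is a short computation once the two identities are in hand.
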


Theorem \ref{thm:main} says that the images $\chi_E(f)$ and $\chi_B(f)$ agree up to depth $d+3$. The proof of Theorem \ref{thm:main} is straightforward; one can write down the formulas for $\chi_E(f)$ and $\chi_B(f)$ explicitly, and then compare them. In particular, for $f=x_1^{2k}$ we obtain $\chi_E(x_1^{2k})^{(d)}=\chi_B(x_1^{2k})^{(d)}$ for $d\leq 4$. On the other hand, $\chi_E(x_1^{2k})^{(5)}-\chi_B(x_1^{2k})^{(5)} \neq 0$ and the difference can be written using a specific solution to the linearized double shuffle equations in depth four (see Remark \ref{rmk:diff}).

One can also wonder about the existence of a ``universal'' isomorphism between $\ls_\cQ$ and $\dm_\cQ$ from which both $\chi_E$ and $\chi_B$ can be obtained as specializations.
We hope to address this problem in a future work (see \S 5.4).

\subsection{Structure of the paper}

In \S2, we introduce the double shuffle equations in the setting of rational functions, thereby also fixing some of our notation. 
In \S3, we review Ecalle's theory of moulds.
In \S4, we prove Proposition \ref{prop:V}: an explicit connection between the Ihara and the ari bracket. 
These are then put to use in \S5 where, after reviewing the definition of the maps $\chi_E$ and $\chi_B$, we prove our main result, Theorem \ref{thm:main}.
\S6 will be devoted to a list of anatomical decompositions of $\sigma_{2k+1}$ of Ecalle's polar solutions.

\subsection*{Acknowledgments}
We are grateful to Francis Brown for inspiring and very valuable discussions, as well as to Leila Schneps for answering our questions about Ecalle's theory. We would like to thank Henrik Bachmann, Francis Brown, Hidekazu Furusho, Ulf K\"uhn as well as the referee for comments and corrections on earlier versions, and the Hausdorff Institute for Mathematics and the Max Planck Institute for Mathematics for hospitality. This work is partially supported by JSPS KAKENHI Grant No. 18K13393, 17F17020 and 16H07115 and was done while N.M. was a JSPS postdoctoral fellow. N.M. would also like to thank his academic host, Professor Masanobu Kaneko, for his support and fruitful discussions on this research.

\section{The double shuffle equations for rational functions} \label{sec:2}
In this section, we define two spaces $\dm_\cQ$ and $\ls_\cQ$, partially following \cite{Brown:Anatomy}.

\subsection{Moulds} \label{ssec:2.1}
We consider sequences $f=(f^{(0)},f^{(1)}(x_1),f^{(2)}(x_1,x_2),\ldots)$ of rational functions where $f^{(r)} \in \Q(x_1,\ldots,x_r)$ (in particular, $f^{(0)} \in \Q$ is a constant). In \cite{SalernoSchneps}, these objects are called ``rational-function valued moulds'' and they are a special case of the moulds introduced by Ecalle \cite{Ecalle:ARI}; a mould is a sequence of multivariable functions. For any such sequence, we will call $f^{(r)}$ the depth $r$ component of $f$. We shall actually only be interested in a graded subspace of all sequences of rational functions, and accordingly define
\begin{align*}
\cQ:=\bigoplus_{k\in \Z}\cQ_k  \quad \mbox{with} \quad  \cQ_k = \left\{ (f^{(r)})\in  \prod_{r\ge0} \Q(x_1,\ldots,x_r) \ \bigg| \ \deg f^{(r)} =k-r, \ r\ge0\right\},
\end{align*}
where $\Q(x_1,\ldots,x_r)$ is regarded as $\Q$ when $r=0$.
We shall assume that the degree of $0\in\Q$ can be arbitrary.
We say that an element $f\in \cQ_k$ has weight $-k$. 
The space $\cQ$ is naturally a topological $\Q$-vector space whose topology is induced from the \textit{depth filtration} $\cQ^{\bullet}$, defined for $d\geq 1$ by 
\begin{equation}\label{eqn:Q-depth}
\cQ^{(d)}:=\{ (f^{(r)})\in \cQ \, \vert \, f^{(r)}=0, \, 0\le r\le d-1 \}
\end{equation}
and $\cQ^{(0)}=\cQ$.
Write $\cQ^{(d)}_k = \cQ^{(d)} \cap \cQ_k$.
Then we can and will identify the quotient space $\cQ^{(d)}_k/\cQ^{(d+1)}_k$ with the subspace $\{f\in \Q(x_1,\ldots,x_d) \mid \deg f=k-d\}\subset \Q(x_1,\ldots,x_d)$.

For the rest of this paper, by a ``mould'' we will always mean an element of $\cQ$.

\subsection{Convention} \label{ssec:2.1.1}
For any mould $f=(f^{(r)})$ and any non-empty word $\x_{n_1}\cdots \x_{n_r}$ consisting of any letters indexed by positive integers, we write
\[ f(\x_{n_1}\cdots \x_{n_r})=f^{(r)}(x_{n_1},\ldots,x_{n_r}) ,\]
and set $f(\varnothing)=f^{(0)}$ for the empty word $\varnothing$.
We extend this notation to all linear combinations of words by linearity with the coefficients in the rational functions.
For example, we have
\begin{equation}\label{eq:ex-ast-dep2}
f(\x_1\x_2+\x_2\x_1+\frac{\x_1-\x_2}{x_1-x_2}) = f^{(2)}(x_1,x_2)+f^{(2)}(x_2,x_1)+\frac{f^{(1)}(x_1)-f^{(1)}(x_2)}{x_1-x_2}.
\end{equation}
This notation is very useful for our purpose.
Note that there are exceptions; for example, if $f^{(2)}(x_1,x_2)$ has a pole at $x_1=x_2$, then $f(\x_1\x_1)$ does not make sense. 
However, we do not meet this sort of substitution later.

\subsection{Two products} \label{ssec:2.2}

Let $X=\{\x_1,\x_2,\ldots\}$ be an infinite set of letters. We also denote by $K:=\varinjlim\limits_r \Q(x_1,\ldots,x_r)$, where the transition maps are the obvious inclusions, the field of rational functions in a countable number of variables.
Denote by $\langle X\rangle$ the free monoid of words on $X$, including the empty word $\varnothing$, and for any $\Q$-algebra $R$ we denote by $R\langle X\rangle$ the free $R$-module spanned by the set $\langle X\rangle$.

The shuffle product $\sh$ is defined on $\Q\langle X\rangle$ inductively by 
\[ \x_n w_1\sh  \x_m w_2 = \x_n (w_1 \sh  \x_m w_2) + \x_m (\x_n w_1\sh  w_2), \]
and $\varnothing\sh w=w\sh  \varnothing=w$, for any words $w,w_1,w_2 \in \langle X\rangle$ and $n,m\ge1$, and then extending by $\Q$-bilinearity.
For example, we have $f(\x_1 \sh \x_2) = f^{(2)}(x_1,x_2)+f^{(2)}(x_2,x_1)$.

We also define the product $\ast$ on $K\langle X\rangle$ inductively by
\[ \x_{n}w_1 \ast \x_{m}w_2= \begin{cases} 0 & n=m\\  \x_{n}(w_1\ast \x_{m}w_2)+\x_{m}(\x_{n}w_1\ast w_2)+\frac{\x_{n}-\x_{m}}{x_{n}-x_{m}}(w_1\ast w_2)& \mbox{otherwise} \end{cases}  \]
with $\varnothing\ast w=w \ast \varnothing= w$, for any words $w_1,w_2 \in \langle X\rangle$ and $n,m\ge1$, and then extending $K$-bilinearly.
In the expansion of $\x_{n}w_1 \ast \x_{m}w_2$, we shall think of $x_n$ and $x_m$ as commutative variables corresponding to $\x_n$ and $\x_m$, respectively.
For example, the right-hand side of \eqref{eq:ex-ast-dep2} can be written as a single term $f(\x_1\ast \x_2)$. Note that the product $\ast$ corresponds to the stuffle product on the level of commutative generating series (see \cite[\S7]{Ihara:RIMS} and \cite[\S5]{Brown:Anatomy}).

\subsection{The sharp operator} \label{ssec:2.2.1}

We define an injective $\Q$-linear map $\sharp: \cQ \rightarrow \cQ,\ f\mapsto f^\sharp$ for each depth $r$ component by
\begin{equation} \label{eqn:sharp}
(f^{\sharp})^{(r)}(x_1,\ldots,x_r):=f^{(r)}(x_1,x_1+x_2,\ldots,x_1+\cdots+x_r),
\end{equation}
which appears for example in \cite[p.331]{IKZ} (where the order of variables is reversed).
Let $\flat: \cQ \rightarrow \cQ, \ f\mapsto f^\flat$ be the injective $\Q$-linear map, given by
\begin{equation} \label{eqn:flat}
(f^{\flat})^{(r)}(x_1,\ldots,x_r):=f^{(r)}(x_1,x_2-x_1,\ldots,x_r-x_{r-1}).
\end{equation}
We understand $f^\sharp(\varnothing)=f^\flat(\varnothing)=f^{(0)}$.
Since $(f^\sharp)^\flat=(f^\flat)^\sharp =f$, the maps $\sharp$ and $\flat$ are automorphisms on $\cQ$.

\subsection{Double shuffle equations modulo products} \label{ssec:2.2.2}
We can now define our fundamental object $\dm_\cQ$ studied in this paper.
\begin{definition} \label{dfn:dm}
Let $\dm_{\cQ}$ be the set of elements $f\in \cQ$ such that (i) $f^{(0)}=0$, (ii) its depth one component is even, i.e. $f^{(1)}(-x_1)=f^{(1)}(x_1)$, and (iii) for all $r\geq 2$ and $1\leq i<r$, we have
\begin{equation} \label{eqn:dm}
f^{\sharp}(\x_1\cdots \x_i\sh \x_{i+1}\cdots \x_r)=f(\x_1\cdots \x_i\opast \x_{i+1}\cdots \x_r)=0.
\end{equation}
\end{definition}

The equations \eqref{eqn:dm} are called the double shuffle equation modulo products.
By definition, the space $\dm_\cQ$ inherits the weight grading and the depth filtration from the ambient space $\cQ$.
Note that the Lie algebra $\mathfrak{dmr}_0$ introduced by Racinet \cite[\S3.3.1]{Racinet} is \textit{not} embedded into our space $\dm_\cQ$. 
The difference is that, while the equation $f(w_1\opast w_2)=0$ holds on the nose for elements in $\dm_\cQ$, elements in $\mathfrak{dmr}_0$ are only required to satisfy the above equation up to a certain correction term.

\begin{remark}\label{rmk:dm}
In \cite[Definition 9.1]{Brown:Anatomy}, a space $\mathfrak{pdmr}$ very similar to $\dm_\cQ$ is introduced.
It differs in a restriction to poles; the subspace $\mathfrak{pdmr}\subset \dm_\cQ$ admits only poles along $x_i=0$ and $x_i=x_j$ for $i,j\ge1$.
In \cite[\S10]{Brown:Anatomy}, certain solutions $\psi_{2n+1}\in \mathfrak{pdmr}$ were constructed explicitly, which will be different from Ecalle's solution $\chi_E(x_1^{2n})$ and Brown's solution $\chi_B(x_1^{2n})$ defined in \S5. 
We point out that there seems to be no restriction on the depth one component in the definition of $\mathfrak{pdmr}$, although this is required in the proof of its Lie algebra structure \cite[Theorem 9.2]{Brown:Anatomy}.

\end{remark}

\subsection{Linearized double shuffle equations} \label{ssec:2.2.3}
Observe that $f(\x_1\dots\x_i\ast \x_{i+1}\dots\x_r) \equiv f(\x_1\dots\x_i\sh \x_{i+1}\dots\x_r)$ modulo terms of depth $\leq r-1$. The equations \eqref{eqn:ls}, defined below, which are obtained from \eqref{eqn:dm} by discarding all terms of depth $\leq r-1$ are called the linearized double shuffle equations \cite{Brown:depthgraded,IKZ}.
\begin{definition} \label{dfn:ls}
Let $\ls_{\cQ}$ be a set of elements $f\in \cQ$ such that (i) $f^{(0)}=0$, (ii) its depth one component is even, i.e. $f^{(1)}(-x_1)=f^{(1)}(x_1)$, and (iii) for all $r\geq 2$ and $1\leq i<r$, we have
\begin{equation} \label{eqn:ls}
f^{\sharp}(\x_1\cdots \x_i\sh\x_{i+1}\cdots \x_r)=f(\x_1\cdots \x_i\sh\x_{i+1}\cdots \x_r)=0.
\end{equation}
\end{definition}
A prototype of solutions to \eqref{eqn:ls} is given by the generating series of regularized multiple zeta values modulo products and lower depths of a fixed weight $k$ (see \cite[Corollary 7]{IKZ}).

Note that, by definition, the subspace $\ls_{\cQ} \subset \cQ$ is bigraded by the weight and the depth, namely, we have
\[ \ls_\cQ \cong \prod_{r\ge0} \gr^{(r)} \ls_\cQ,\]
where $\gr^{(r)} \ls_\cQ:= \ls_\cQ^{(r)}/\ls_\cQ^{(r+1)}$ with $\ls_\cQ^{(r)}:=\ls_\cQ \cap \cQ^{(r)}$.
Later, for $f\in \ls_\cQ$, its depth $r$ component $f^{(r)}$ is viewed as an element in $\gr^{(r)}\ls_\cQ$.


\section{Some aspects of Ecalle's theory} \label{sec:3}
In this section, we briefly review Ecalle's theory from \cite{Ecalle:ARI,SalernoSchneps,Schneps:ARI} with our notational conventions and translate it into our spaces.

\subsection{$ARI$ and $GARI$}
The space $\cQ$ contains two distinguished subsets
\[
\cL:=\{ f\in \cQ \mid f^{(0)}=0 \}, \quad \cG:=\{ f\in \cQ \mid f^{(0)}=1 \}
\]
of sequences whose depth zero term $f^{(0)}$ is equal to $0$, respectively equal to $1$ (note that $\cL=\cQ^{(1)}$ is a graded $\Q$-vector space with respect to the weight).
The notations $\cL$ and $\cG$ come from ``Lie algebra" and ``group", corresponding to restrictions of $ARI$ and $GARI$ to rational functions, respectively (see \cite[\S9]{Ecalle:ARI} and also \cite[\S2]{SalernoSchneps} for the definition of $ARI$ and $GARI$).
Below, we will endow $\cL$ with a Lie bracket which turns it into a (pro-nilpotent) Lie algebra, and $\cG$ will be the associated (pro-unipotent) group (more precisely, its $\Q$-rational points). 

\subsection{The ari bracket}
We now define the ari bracket \cite{Ecalle:ARI,SalernoSchneps} translated into our setting.
The order of composition is reversed from \cite[Eq.(2.10)]{SalernoSchneps}, for easier comparison with the Ihara bracket.
\begin{definition}\label{def:ari-bracket}
The preari action is the continuous, $\Q$-bilinear map $\cp_{\rm ari}: \cL \times \cL \rightarrow \cL $.
The depth $d$ component of $f\cp_{\rm ari} g$ is given by 
\[ (f\cp_{\rm ari} g)^{(d)} (x_1,\ldots,x_{r+s})= \sum_{r+s=d} (f^{(r)}\cp_{\rm ari} g^{(s)})(x_1,\ldots,x_{r+s}),\] 
where the terms $f^{(r)}\cp_{\rm ari} g^{(s)}$ are defined by
\begin{equation*}\label{eq:def-preari_u}
\begin{aligned}
&(f^{(r)}\cp_{\rm ari}g^{(s)})(x_1,\ldots,x_{r+s})\\
&= \sum_{i=0}^s f^{(r)}(x_{i+1},\ldots,x_{i+r}) g^{(s)}(x_1,\ldots,x_i,\sum_{j=i+1}^{i+r+1}x_j, x_{i+r+2},\ldots, x_{r+s})  \\
&-\sum_{i=1}^s f^{(r)}(x_{i+1},\ldots,x_{i+r}) g^{(s)}(x_1,\ldots,x_{i-1},\sum_{j=i}^{i+r}x_j, x_{i+r+1},\ldots, x_{r+s}).
\end{aligned}
\end{equation*}
The ari bracket $\{\phantom{\cdot} ,\phantom{\cdot}\}_{\rm ari}$ is then defined to be the antisymmetrization of the preari action, i.e.
\[
\{f,g\}_{\rm ari}:=f\cp_{\rm ari} g-g\cp_{\rm ari}f.
\]
\end{definition}

The space $\cL$ forms a graded Lie algebra with $\{\phantom{\cdot},\phantom{\cdot}\}_{\rm ari}$ (see \cite[Proposition 2.2.2]{Schneps:ARI}). 
Rephrasing Theorem 3.3 of \cite{SalernoSchneps} using our conventions, we now prove that the image $\ls_\cQ^{\sharp}:=\{ f^{\sharp} \in \cL\mid f\in \ls_\cQ \}$ forms a Lie subalgebra of $\cL$.
\begin{theorem}\label{thm:lie-ls}
The space $\ls_\cQ^{\sharp}$ is a Lie algebra under the ari bracket $\{\phantom{\cdot} ,\phantom{\cdot}\}_{\rm ari}$.
\end{theorem}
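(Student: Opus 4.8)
The plan is to transport the linearized double shuffle equations through the $\sharp$-conjugation and reinterpret them in Ecalle's mould-theoretic language, where the ari bracket is designed precisely to preserve ``alternality'' and ``alternility'' type symmetries. Concretely, recall that $\ls_\cQ$ is defined by two families of equations: the shuffle-type relations $f(\x_1\cdots\x_i\sh\x_{i+1}\cdots\x_r)=0$ and the $\sharp$-conjugated shuffle-type relations $f^\sharp(\x_1\cdots\x_i\sh\x_{i+1}\cdots\x_r)=0$ (the latter being the linearization of the stuffle side). First I would check that, under the substitution $x_i\mapsto x_1+\cdots+x_i$, the first family of equations for $f$ becomes exactly the statement that $f^\sharp$ is \emph{alternal} in Ecalle's sense, i.e. it vanishes on all shuffles in the variables $x_i$ after the change of coordinates that turns the ``stuffle'' comultiplication into the ``shuffle'' comultiplication; and that the second family says $f^\sharp$ satisfies a dual symmetry (what Schneps and Salerno call the ``$\mathrm{push}$-neutral'' or second-side condition). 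So the first step is a bookkeeping lemma: $f\in\ls_\cQ \iff f^\sharp$ lies in the subspace of $\cL$ cut out by Ecalle's two linearized symmetries, which is exactly the space appearing in Theorem 3.3 of \cite{SalernoSchneps}.

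Next I would invoke the structural result from Ecalle's theory — Theorem 3.3 of \cite{SalernoSchneps} in the formulation recalled here — which asserts that this subspace of $\cL$ is closed under the ari bracket $\{\phantom{\cdot},\phantom{\cdot}\}_{\rm ari}$. Since we have already shown in this subsection that $(\cL,\{\phantom{\cdot},\phantom{\cdot}\}_{\rm ari})$ is a graded Lie algebra \cite[Proposition 2.2.2]{Schneps:ARI}, it suffices to verify that $\ls_\cQ^\sharp$ is a $\Q$-subspace stable under $\{\phantom{\cdot},\phantom{\cdot}\}_{\rm ari}$; the Jacobi identity and antisymmetry are then inherited. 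Stability under the bracket is then the content of (the translation of) \cite[Theorem 3.3]{SalernoSchneps}: the preari action $\cp_{\rm ari}$ manifestly respects the two symmetries by the very combinatorics of its definition, and antisymmetrizing preserves this. I would also record the trivial checks that $\ls_\cQ^\sharp\subset\cL$ (condition $f^{(0)}=0$ survives $\sharp$) and that the depth-one evenness condition is automatically preserved, since the depth-one component of an ari bracket of two elements of positive depth is governed only by the lower-depth data in a way compatible with evenness (indeed the depth-one part of $\{f,g\}_{\rm ari}$ is a sum of products $f^{(1)}g^{(0)}$-type terms which vanish on $\cL$, so the depth-one component of a bracket in $\cL$ vanishes and evenness is vacuous there).

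The main obstacle I anticipate is purely notational rather than mathematical: \cite{SalernoSchneps} works with a reversed order of composition (as flagged in the paper just before Definition \ref{def:ari-bracket}) and with moulds normalized differently, so the bulk of the argument is carefully matching our $\sharp$-conjugated shuffle/stuffle relations with their ``$al/il$'' or ``$al/al$'' symmetry conditions and confirming that no sign or variable-ordering discrepancy invalidates the transfer of \cite[Theorem 3.3]{SalernoSchneps}. In particular one must be careful that the version of the ari bracket written in Definition \ref{def:ari-bracket}, with the order of composition reversed relative to \cite[Eq.~(2.10)]{SalernoSchneps}, still lands in the same subspace — this is fine because the subspace in question is defined by symmetric conditions and the antisymmetrization $\{f,g\}_{\rm ari}=f\cp_{\rm ari}g-g\cp_{\rm ari}f$ only changes by an overall sign under reversal. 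Once the dictionary is pinned down, the proof is a one-line citation: $\ls_\cQ^\sharp$ is the image under $\sharp$ of the solution space of the linearized double shuffle equations, this image coincides with the subspace of $\cL$ described by Ecalle's linearized symmetries, and that subspace is a Lie subalgebra of $(\cL,\{\phantom{\cdot},\phantom{\cdot}\}_{\rm ari})$ by \cite[Theorem 3.3]{SalernoSchneps}.
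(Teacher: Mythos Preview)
Your approach is essentially the paper's: cite \cite[Theorem~3.3]{SalernoSchneps} that $ARI_{\underline{al}/\underline{al}}$ is closed under $\{\phantom{\cdot},\phantom{\cdot}\}_{\rm ari}$, and then verify that the defining equations of $\ls_\cQ^\sharp$ agree with those of $ARI_{\underline{al}/\underline{al}}$. The paper carries out the dictionary explicitly via $swap^\flat(f)=anti(f^\flat)$ and the identity $\overline{w\sh w'}=\overline{w}\sh\overline{w}'$, so the second-side condition is alternality of $swap^\flat(f)$ (equivalently of $f^\flat$), not a ``push-neutral'' condition as you wrote---but once you make that correction your translation goes through exactly as in the paper.
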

\begin{proof}
This follows from the fact that $ARI_{\underline{al}/\underline{al}}$ forms a Lie algebra under the ari bracket $\{\phantom{\cdot} ,\phantom{\cdot}\}_{\rm ari}$ \cite[Theorem 3.3]{SalernoSchneps} and the equivalence of the defining equations of $ \ls_\cQ^{\sharp}$ and $ARI_{\underline{al}/\underline{al}}$, which we now check.

Let us first recall $ARI_{\underline{al}/\underline{al}}$ from \cite[\S3]{SalernoSchneps}, with our convention.
Let $swap^{\flat}:\cQ\rightarrow \cQ$ be the $\Q$-linear map given for each depth $r$ component by
\begin{align*} 
swap^{\flat}(f)^{(r)}(x_1,\ldots,x_r) = f^{(r)} (x_r,x_{r-1}-x_r,\ldots,x_1-x_2).
\end{align*}
The space $ARI_{\underline{al}/\underline{al}}$ is then defined as the set of moulds $f$ such that $f^{(0)}=0,f^{(1)}(x_1)=f^{(1)}(-x_1)$ and for all $r\ge2$ and $1\le i<r$ the equality $f(\x_1\cdots \x_i\sh \x_{i+1}\cdots \x_r)=swap^\flat (f)(\x_1\cdots \x_i\sh \x_{i+1}\cdots \x_r)=0$ holds.
Now let $anti: \cQ\rightarrow \cQ$ be the $\Q$-linear map given by 
\begin{equation}\label{eqn:anti}
anti(f)^{(r)}(x_1,\ldots,x_r)=f^{(r)}(x_r,\ldots,x_1).
\end{equation}
We easily see that $swap^\flat(f) =anti(f^\flat)$ and that $anti(f)(w)=f(\overline{w})$ for any word $w$, where $\overline{w}$ denotes the reverse word of $w$.
We extend the notation $\overline{w}$ to all linear combination of words by linearity.
Using the well-known identity $w\x_n\sh w'\x_m=(w\sh w\x_m)\x_n + (w\x_n\sh w')\x_m$, we see by induction on the length of words that
\[ \overline{w\sh w'} = \overline{w} \sh \overline{w}'\]
holds for any words $w,w'$.
Hence
\[swap^\flat (f)(\x_1\cdots \x_i\sh \x_{i+1}\cdots \x_r) = f^\flat (\x_i\cdots \x_1\sh \x_r\cdots \x_{i+1}), \]
which shows that the defining equations of $ARI_{\underline{al}/\underline{al}}$ and $\ls_\cQ^\sharp$ are the same.
\end{proof}

\subsection{The ari-exponential} \label{ssec:3.3}
For $f \in \cL$ and $n\geq 1$, define $f_{\cp_{\rm ari}}^n$ recursively by
\[
f_{\cp_{\rm ari}}^1=f, \quad f_{\cp_{\rm ari}}^n=f\cp_{\rm ari} f_{\cp_{\rm ari}}^{n-1}.
\]
Again, we have reversed the order of composition with respect to \cite[\S5]{SalernoSchneps}.
\begin{definition}\label{dfn:expari}
The ari-exponential $\exp_{\rm ari}: \cL\rightarrow \cG$ is defined for $f\in \cL$ by
\[
\exp_{\rm ari}(f)=\mathbf{1}+\sum_{n=1}^{\infty}\frac{f_{\cp_{\rm ari}}^n}{n!},
\]
where $\mathbf{1}:=(1,0,0,\ldots)\in \cG$.
\end{definition}
The ari-exponential is bijective, and its inverse is denoted by 
\[
\log_{\rm ari}: \cG \rightarrow \cL.
\]
Note that $\log_{\rm ari}(F)$ for $F\in \cG$ is computed inductively by  $\exp_{\rm ari}(\log_{\rm ari}(F))=F$. For instance, one computes $\log_{\rm ari}(F)^{(0)}=0, \log_{\rm ari}(F)^{(1)}=F^{(1)}, \log_{\rm ari}(F)^{(2)}= F^{(2)} - \frac12 F^{(1)} \cp_{\rm ari} F^{(1)}$ and so on.
\begin{remark}
Due to the non-associativity of $\cp_{\rm ari}$, we have to be slightly careful about the definition of the exponential and logarithm map. In particular, $\log_{\rm ari}(F)$ is \textit{not} given by the standard formula $\sum_{n=1}^{\infty}\frac{(-1)^n}{n}(F-\mathbf{1})_{\cp_{\rm ari}}^n$. The reason is that the product $\circledast$ on the universal enveloping algebra $\mathcal{U}(\cL)$ of $\cL$ is not equal to $\cp_{\rm ari}$ (cf. \cite[Definition 2.9]{OudomGuin}). On the other hand, the definition of $\exp_{\rm ari}$ is fine, because if we identify $\cL$ as a subspace of $\mathcal{U}(\cL)$ (via Poincar\'e--Birkhoff--Witt), then $f\circledast g=f\cp_{\rm ari}g$, whenever $f\in \cL$.
\end{remark}

\subsection{Adjoint action} \label{ssec:3.4}
Using $\exp_{\rm ari}$, we can endow $\cG$ with a group structure using the Baker--Campbell--Hausdorff formula (cf. \cite[Ch. 3]{Reutenauer}).
More precisely, let ${\rm ch}(f,g)$ denote the Baker--Campbell--Hausdorff series of $f$ and $g$ (see also \cite[\S5]{SalernoSchneps}).
One can define the group law $\circ_{\rm ari}$ on $\cG$ for $F=\exp_{\rm ari}(f)$ and $G=\exp_{\rm ari}(g)$ with $f,g \in \cL$ by
\[
F \circ_{\rm ari} G:=\exp_{\rm ari}({\rm ch}(f,g)).
\]
The group $(\cG,\circ_{\rm ari})$ then acts on the Lie algebra $(\cL,\{\phantom{\cdot},\phantom{\cdot}\}_{\rm ari})$ via the adjoint action below.
\begin{definition}
The adjoint action $\Ad_{\rm ari}: \cG \times \cL \rightarrow \cL$ is defined for $G\in\cG$ and $f\in\cL$ by the formula
\[
\Ad_{\rm ari}(G)(f):= \sum_{n=0}^{\infty}\frac{1}{n!}\ad_{\rm ari}^n(g)(f),
\]
where we let $g=\log_{\rm ari}(G)$ and $\ad_{\rm ari}^0(g)(f)=f$ and set $\ad_{\rm ari}^n(g)(f)= \{\ad_{\rm ari}^{n-1}(g)(f),g\}_{\rm ari} $ for $n\ge1$.\footnote{ Note that with our convention $\ad(g)$ is a right action; it is related to the usual adjoint map $\widetilde{\ad}(g)(f):=\{g,f\}$ via $\ad(g)(f)=\widetilde{\ad}(-g)(f)$.}
\end{definition}

It follows that for every fixed $G \in \cG$ we obtain an isomorphism of Lie algebras $\Ad_{\rm ari}(G): \cL \rightarrow \cL$.
In Ecalle's theory, a suitable element $P\in \cG$ is chosen and its adjoint action induces a Lie isomorphism between $\ls_\cQ^\sharp$ and $\dm_\cQ^\sharp=\{f^\sharp \in \cL \mid f\in \dm_\cQ\}$, so that, by Theorem \ref{thm:lie-ls} the space $\dm_\cQ^\sharp$ inherits a Lie algebra structure under the ari bracket.

\subsection{Ecalle's theorem} \label{ssec:4.2}

We follow the exposition of \cite[\S4]{Schneps:ARI} for the choice of $P$.
Let us define $P\in \cG$ recursively by $P^{(0)}=1$ and for $r\geq 1$ by
\[
P^{(r)}(x_1,\ldots,x_r)=\frac{1}{x_1+\cdots+x_r}\sum_{i=0}^{r-1} P^{(i)}(x_1,\ldots,x_i)d^{(r-i)}(x_{i+1},\ldots,x_r)
\]
where $d^{(r)} \in \Q(x_1,\ldots,x_r)$ is defined by $d^{(0)}=0$ and for $r\geq 1$ by
\[
d^{(r)}(x_1,\ldots,x_r)=\frac{B_r}{r!}\sum_{i=0}^{r-1}(-1)^i\binom{r-1}{i}\frac{x_{r-i}}{x_1\cdots x_r},
\]
and the $B_r$ are the Bernoulli numbers. 
In particular, $d^{(1)}(x_1)=-\frac 12$ and $d^{(r)}$ vanishes for odd $r\geq 3$. 
Clearly, $P$ has weight 0.
The element $P$ is denoted by \textit{pal} in both \cite{Ecalle:ARI} and \cite{Schneps:ARI}, and $d$ is denoted by \textit{dupal}. 

For simplicity of notation, we let
\[
\phi_0=\log_{\rm ari}(P) \in \cL.
\]
The first few values of $\phi_0=(\phi_0^{(r)})$ are given by
\begin{equation}\label{eqn:phi_0}
\begin{aligned}
\phi_0^{(1)}(x_1)&=-\frac{1}{2x_1}, \ \phi_0^{(2)}(x_1,x_2)=\frac{x_1-x_2}{12x_1x_2(x_1+x_2)},\\
\phi_0^{(3)}(x_1,x_2,x_3)&=\frac{-x_2 x_1^2+x_3 x_1^2-x_2^2 x_1+x_3^2 x_1+2 x_2 x_3 x_1-x_2
   x_3^2-x_2^2 x_3}{48 x_1 x_2x_3  (x_1+x_2) (x_2+x_3) (x_1+x_2+x_3)}.
\end{aligned}
\end{equation}

\begin{remark}\label{rmk:phi0}
Combining Proposition 4.12, Theorems 4.2.1 and 4.3.4 of \cite{Schneps:ARI}, for any $r\ge0$ and $0\le i\le r$ we have
\begin{equation} \label{eqn:dshmodlowerdepth}
\begin{gathered}
P(\x_1\cdots \x_i\sh  \x_{i+1}\cdots \x_{r})=P(\x_1\cdots \x_i)P(\x_{i+1}\cdots \x_{r}),\\
P^{\flat}(\x_1\cdots \x_i\sh  \x_{i+1}\cdots \x_{r})=P^\flat(\x_1\cdots \x_i)P^\flat(\x_{i+1}\cdots \x_{r}).
\end{gathered}
\end{equation}
From the first equality in \eqref{eqn:dshmodlowerdepth}, one can show that 
\begin{equation}\label{eq:phi0-shuffle}
\phi_0 (\x_1\cdots \x_i \sh \x_{i+1}\cdots \x_r) = 0
\end{equation}
holds for all $r\ge2$ and $1\leq i<r$ (see \cite[Proposition 2.6.1]{Schneps:ARI}).
On the other hand, note that $\phi_0^\flat$ does not satisfy the equations $\phi_0^\flat (\x_1\cdots \x_i \bullet \x_{i+1}\cdots \x_r) = 0$ for $\bullet\in \{\ast,\sh\}$, and hence, $\phi_0^\flat\not\in\dm_\cQ$ and $\phi_0^\flat\not\in\ls_\cQ$.
\end{remark}

With $P$, one of the main theorems of Ecalle's theory of moulds is stated as follows. 
\begin{theorem}\label{thm:Ecalle's-fundamental}
For $f\in \ls_{\cQ}^{\sharp}$, we have $\Ad_{\rm ari}(P)(f) \in \dm_{\cQ}^{\sharp}$. 
In particular, $\Ad_{\rm ari}(P): \cL \rightarrow \cL$ restricts to an isomorphism of Lie algebras
\[
\Ad_{\rm ari}(P):\ls_\cQ^{\sharp} \stackrel{\cong}\longrightarrow \dm_\cQ^{\sharp},
\]
where both sides are endowed with the ari bracket.
\end{theorem}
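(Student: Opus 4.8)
The plan is to reduce the statement to Ecalle's theorem in the form proved by Schneps, after matching our spaces with Ecalle's alternal/alternil spaces. The ``in particular'' clause is then formal. Indeed, for any $G\in\cG$ the map $\Ad_{\rm ari}(G)\colon\cL\to\cL$ is a Lie algebra automorphism of $(\cL,\{\phantom{\cdot},\phantom{\cdot}\}_{\rm ari})$ with inverse $\Ad_{\rm ari}(G^{-1})$ (see \S\ref{ssec:3.4}); hence, once we know that $\Ad_{\rm ari}(P)$ restricts to a bijection $\ls_\cQ^\sharp\to\dm_\cQ^\sharp$, this restriction is automatically a Lie isomorphism, and $\dm_\cQ^\sharp$ is thereby realized as a Lie subalgebra of $(\cL,\{\phantom{\cdot},\phantom{\cdot}\}_{\rm ari})$. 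So the whole content is the first assertion $\Ad_{\rm ari}(P)(\ls_\cQ^\sharp)\subseteq\dm_\cQ^\sharp$ together with the analogous inclusion $\Ad_{\rm ari}(P^{-1})(\dm_\cQ^\sharp)\subseteq\ls_\cQ^\sharp$ (with $P^{-1}=\exp_{\rm ari}(-\phi_0)$ the $\circ_{\rm ari}$-inverse), and these together are exactly Ecalle's statement that conjugation by $\mathrm{pal}$ carries $ARI_{\underline{al}/\underline{al}}$ bijectively onto $ARI_{\underline{al}/\underline{il}}$.

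First I would set up the dictionary. In the proof of Theorem~\ref{thm:lie-ls} it was shown that $\ls_\cQ^\sharp$ is the set of moulds $f$ with $f^{(0)}=0$, $f^{(1)}$ even, $f$ alternal and $swap^\flat(f)$ alternal, i.e.\ $ARI_{\underline{al}/\underline{al}}$. I claim that $\dm_\cQ^\sharp$ is, in the same way, the set of moulds $f$ with $f^{(0)}=0$, $f^{(1)}$ even, $f$ alternal and $swap^\flat(f)$ \emph{alternil}, i.e.\ $swap^\flat(f)(\x_1\cdots\x_i\opast\x_{i+1}\cdots\x_r)=0$ for all $r\ge2$ and $1\le i<r$; this is Ecalle's $ARI_{\underline{al}/\underline{il}}$. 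The verification is the evident analogue of the one in Theorem~\ref{thm:lie-ls}: writing $g=f^\sharp$, the equation $f^\sharp(\x_1\cdots\x_i\sh\x_{i+1}\cdots\x_r)=0$ says exactly that $g$ is alternal, while, using $swap^\flat(g)=anti(g^\flat)$ together with the identity $\overline{w_1\opast w_2}=\overline{w_1}\opast\overline{w_2}$ (which follows by induction from the last-letter form of the recursion defining $\opast$, available because $\opast$ is commutative) and the fact that $anti$ is an automorphism of $\cQ$, one sees that $g^\flat(\x_1\cdots\x_i\opast\x_{i+1}\cdots\x_r)=0$ for all admissible $i,r$ is equivalent to $swap^\flat(g)$ being alternil.

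It then remains to prove that $\Ad_{\rm ari}(P)$ sends $ARI_{\underline{al}/\underline{al}}$ into $ARI_{\underline{al}/\underline{il}}$ and $\Ad_{\rm ari}(P^{-1})$ sends $ARI_{\underline{al}/\underline{il}}$ into $ARI_{\underline{al}/\underline{al}}$, which splits into two parts. The easy part is preservation of the ``first $al$'': by \eqref{eq:phi0-shuffle} the mould $\phi_0=\log_{\rm ari}(P)$, and hence also $-\phi_0$, is alternal; since the alternal moulds form a Lie subalgebra of $(\cL,\{\phantom{\cdot},\phantom{\cdot}\}_{\rm ari})$ (Ecalle; cf.\ \cite{SalernoSchneps,Schneps:ARI}) and $\Ad_{\rm ari}(P^{\pm1})(f)=\sum_{n\ge0}\frac{1}{n!}\ad_{\rm ari}^n(\pm\phi_0)(f)$ is built from iterated ari-brackets with $\pm\phi_0$, it preserves alternality; it also clearly preserves conditions (i) and (ii). The hard part — and the main obstacle — is the behaviour of the swap condition: one must show that $\Ad_{\rm ari}(P)$ turns ``$swap^\flat(\cdot)$ alternal'' into ``$swap^\flat(\cdot)$ alternil'' and $\Ad_{\rm ari}(P^{-1})$ the reverse. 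This is the technical core of Ecalle's theory: it is carried out by analysing how $swap$ intertwines the ari-adjoint action of $P$ with the corresponding action of $swap(P)$ inside the flexion formalism, and then invoking that $swap(P)$ is group-like for the coproduct underlying the $\opast$-relations (cf.\ the second identity in \eqref{eqn:dshmodlowerdepth}), which converts primitivity for $\sh$ into primitivity for $\opast$. I would invoke the detailed treatment in \cite[\S4]{Schneps:ARI} (following \cite{Ecalle:ARI}), where it is proved that $\mathrm{adari}(\mathrm{pal})$ is a bijection $ARI_{\underline{al}/\underline{al}}\stackrel{\cong}{\longrightarrow}ARI_{\underline{al}/\underline{il}}$. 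The one point requiring care on our side is bookkeeping: our $\sharp$-normalisation and our reversed order of composition in $\cp_{\rm ari}$ and in $\Ad_{\rm ari}$ must be reconciled with the conventions of \cite{SalernoSchneps,Schneps:ARI}, so that our $P$ corresponds to their $\mathrm{pal}$ (or, as the case may be, its inverse) and the bijection transports in the direction asserted.
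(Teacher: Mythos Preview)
Your proposal is correct and takes essentially the same approach as the paper: invoke the Salerno--Schneps/Schneps statement that $\mathrm{adari}(\mathrm{pal})$ carries $ARI_{\underline{al}/\underline{al}}$ isomorphically onto $ARI_{\underline{al}/\underline{il}}$, and verify the dictionary $\dm_\cQ^\sharp=ARI_{\underline{al}/\underline{il}}$ by the same $swap^\flat=anti\circ\flat$ argument used for $\ls_\cQ^\sharp$ in Theorem~\ref{thm:lie-ls}, now with $\opast$ in place of $\sh$. The paper's proof is just a terse version of yours (it cites \cite[Theorem 7.2]{SalernoSchneps} and says the dictionary ``follows from the same argument as in the proof of Theorem~\ref{thm:lie-ls}''), so your additional sketch of the alternal/alternil mechanics and the bookkeeping caveat are elaboration rather than a different route.
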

\begin{proof}
This follows from \cite[Theorem 7.2]{SalernoSchneps} (see also \cite[Theorem 4.6.1]{Schneps:ARI}, where they actually show that $ARI_{\underline{al}/\underline{al}}\cong ARI_{\underline{al}/\underline{il}}$ in the first place).
We should check that the defining equations of $ARI_{\underline{al}/\underline{il}}$ and $\dm_\cQ^{\sharp}$ are the same, but this follows from the same argument as in the proof of Theorem \ref{thm:lie-ls}.
\end{proof}

As a corollary, we see that 
\begin{corollary}\label{cor:lie-dm}
The space $\dm_{\cQ}^{\sharp}$ is a Lie algebra under the ari bracket.
\end{corollary}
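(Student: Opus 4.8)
The plan is to deduce the corollary directly from Theorems \ref{thm:lie-ls} and \ref{thm:Ecalle's-fundamental}, since in fact all of the work has already been done. First I would recall the observation made just after the definition of $\Ad_{\rm ari}$ in \S\ref{ssec:3.4}: for every fixed $G\in\cG$, the map $\Ad_{\rm ari}(G)\colon\cL\to\cL$ is an automorphism of the Lie algebra $(\cL,\{\phantom{\cdot},\phantom{\cdot}\}_{\rm ari})$. Indeed $\Ad_{\rm ari}(G)=\exp\bigl(\ad_{\rm ari}(\log_{\rm ari}(G))\bigr)$ is the exponential of a derivation of the ari bracket, hence an automorphism; in particular this applies to $G=P$, so $\Ad_{\rm ari}(P)$ preserves the ari bracket.

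Next I would invoke Theorem \ref{thm:lie-ls}, which says that $\ls_\cQ^{\sharp}$ is closed under the ari bracket, i.e. it is a Lie subalgebra of $\cL$, together with Theorem \ref{thm:Ecalle's-fundamental}, which says that $\Ad_{\rm ari}(P)$ carries $\ls_\cQ^{\sharp}$ bijectively onto $\dm_\cQ^{\sharp}$. Since the image of a Lie subalgebra under a Lie algebra automorphism is again a Lie subalgebra, it follows that $\dm_\cQ^{\sharp}$ is a Lie subalgebra of $\cL$, which is precisely the assertion. Concretely, given $f^{\sharp},g^{\sharp}\in\dm_\cQ^{\sharp}$, write $f^{\sharp}=\Ad_{\rm ari}(P)(a)$ and $g^{\sharp}=\Ad_{\rm ari}(P)(b)$ with $a,b\in\ls_\cQ^{\sharp}$; then $\{f^{\sharp},g^{\sharp}\}_{\rm ari}=\Ad_{\rm ari}(P)(\{a,b\}_{\rm ari})\in\Ad_{\rm ari}(P)\bigl(\ls_\cQ^{\sharp}\bigr)=\dm_\cQ^{\sharp}$, using that $\{a,b\}_{\rm ari}\in\ls_\cQ^{\sharp}$ by Theorem \ref{thm:lie-ls}.

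I do not expect any genuine obstacle: the only nontrivial inputs are Theorems \ref{thm:lie-ls} and \ref{thm:Ecalle's-fundamental}, and beyond these the argument is the elementary transport-of-structure fact that a Lie algebra isomorphism sends Lie subalgebras to Lie subalgebras. One could instead try to verify closure of $\dm_\cQ^{\sharp}$ under $\{\phantom{\cdot},\phantom{\cdot}\}_{\rm ari}$ by a direct computation with the defining equations \eqref{eqn:dm} and the preari formula, but this would essentially amount to re-proving part of Theorem \ref{thm:Ecalle's-fundamental}, so the transport-of-structure route is the natural one.
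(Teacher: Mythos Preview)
Your proposal is correct and is essentially the same approach as the paper's: the paper states the corollary immediately after Theorem~\ref{thm:Ecalle's-fundamental} with no further argument, since that theorem already asserts that $\Ad_{\rm ari}(P)$ is a Lie algebra isomorphism from $\ls_\cQ^{\sharp}$ onto $\dm_\cQ^{\sharp}$ with respect to the ari bracket. Your write-up simply makes explicit the transport-of-structure reasoning (using Theorem~\ref{thm:lie-ls} and the fact that $\Ad_{\rm ari}(P)$ is an automorphism of $(\cL,\{\phantom{\cdot},\phantom{\cdot}\}_{\rm ari})$) that the paper leaves implicit.
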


\section{Proof of Theorem \ref{thm:dmls}}

The goal of this section is to give a proof that $\ls_\cQ$, $\dm_\cQ$ are Lie algebras with the Ihara bracket.

\subsection{The Ihara bracket}
Following \cite{Brown:Anatomy}, we define the Ihara bracket.

\begin{definition}\label{def:Ihara-bracket}
The linearized Ihara action is the continuous, $\Q$-bilinear map $\cp: \cL \times \cL \rightarrow \cL$.
The depth $d$ component of $f\cp g$ is given by 
\[ (f\cp g)^{(d)} (x_1,\ldots,x_{r+s})= \sum_{r+s=d} (f^{(r)}\cp g^{(s)})(x_1,\ldots,x_{r+s}),\] 
where the terms $f^{(r)}\cp g^{(s)}$ are defined by
\begin{equation} \label{eqn:linIhara}
\begin{aligned}
&(f^{(r)}\cp g^{(s)}) (x_1,\ldots,x_{r+s})\\
&=\sum_{i=0}^s f^{(r)} (x_{i+1}-x_i,\ldots,x_{i+r}-x_i)g^{(s)} (x_1,\ldots,x_i,x_{i+r+1},\ldots,x_{r+s}) \\
& +(-1)^{r}\sum_{i=1}^s f^{(r)}(x_{i+r}-x_{i+r-1},\ldots,x_{i+r}-x_i)g^{(s)}(x_1,\ldots,x_{i-1},x_{i+r},\ldots,x_{r+s}).
\end{aligned}
\end{equation}
The Ihara bracket $\{\phantom{\cdot} ,\phantom{\cdot}\}$ is then defined to be the antisymmetrization of the linearized Ihara action, i.e.
\[
\{f,g\}:=f\cp g-g\cp f.
\]
\end{definition} 
Note that if $f\in \cL_k$, i.e. $f$ has weight $-k$ then the right hand side of \eqref{eqn:linIhara} is equal to
\begin{equation*}
\begin{aligned}
&\sum_{i=0}^s f^{(r)} (x_{i+1}-x_i,\ldots,x_{i+r}-x_i)g^{(s)} (x_1,\ldots,x_i,x_{i+r+1},\ldots,x_{r+s}) \\
& +(-1)^{\deg f^{(r)}+r}\sum_{i=1}^s f^{(r)}(x_{i+r-1}-x_{i+r},\ldots,x_i-x_{i+r})g^{(s)}(x_1,\ldots,x_{i-1},x_{i+r},\ldots,x_{r+s}),
\end{aligned}
\end{equation*}
which is precisely the formula for the linearized Ihara action given in \cite[\S6.3]{Brown:Anatomy}.

The Ihara bracket $\{\phantom{\cdot} ,\phantom{\cdot}\}$ provides another Lie structure on $\cL$ (cf. \cite[Lemma 6.6]{Brown:Anatomy}), and is equivalent to a Lie bracket first considered by Ihara \cite{Ihara:Tatetwists} in the polynomial case.

\begin{remark}
A useful way to memorize the definition of the linearized Ihara action is that the terms appearing in the definition can be identified with the set of all possible ways of excising from $(x_1,\ldots,x_{r+s})$ a sub-tuple $(x_{i+1},\ldots,x_{i+r})$ of $r$ successive points. Then one either subtracts from the removed tuple its left neighbor $x_i$, or one subtracts from the right neighbor $x_{i+r+1}$ the tuple (in which case there is an additional weighting factor $(-1)^r$). 
\end{remark}

\subsection{Comparison of Lie brackets}
We show an explicit connection between the Ihara bracket $\{\phantom{\cdot},\phantom{\cdot}\}$ and the ari bracket $\{\phantom{\cdot},\phantom{\cdot}\}_{\rm ari}$. 
We begin with the following well-known lemma.

\begin{lemma}\label{lem:anti}
If a mould $f$ satisfies $f(\x_1\cdots \x_i\sh\x_{i+1}\cdots \x_r)=0$ for all $1\le i<r$, we have
\[ f(\x_1\x_2\cdots \x_r)+(-1)^{r}f(\x_r\cdots \x_2\x_1)=0.\]
\end{lemma}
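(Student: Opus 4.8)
The plan is to prove the identity by an induction on the depth $r$, using the shuffle relations to peel off the last letter. The key algebraic fact I would exploit is the standard recursion for the shuffle product applied to the "staircase" words, namely that for any word $w$ and any letters one has
\[
\x_1\x_2\cdots\x_{r-1}\sh\x_r \;=\; \sum_{i=1}^{r}\x_1\cdots\x_{i-1}\x_r\x_i\cdots\x_{r-1},
\]
which inserts $\x_r$ into all $r$ available slots of $\x_1\cdots\x_{r-1}$. The hypothesis with $i=r-1$ then says that applying $f$ to this sum gives $0$. Singling out the two extreme insertions (prepending $\x_r$ and appending $\x_r$) expresses $f(\x_r\x_1\cdots\x_{r-1}) + f(\x_1\cdots\x_{r-1}\x_r)$ as minus the sum of the $r-2$ intermediate terms, each of which is $f$ evaluated at a word of the form $\x_1\cdots\x_{i-1}\x_r\x_i\cdots\x_{r-1}$ with $1<i<r$.

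Next I would observe that each such intermediate word can again be rewritten: the block $\x_r\x_i\cdots\x_{r-1}$ is itself, up to relabelling of indices, a deconcatenation of a staircase word, and more directly one can iterate. The cleanest route is actually to set up the induction so that the shuffle relation $f(\x_1\cdots\x_i\sh\x_{i+1}\cdots\x_r)=0$ is used for \emph{all} $i$, not just $i=r-1$: a classical argument (this is exactly the "antipode in the shuffle Hopf algebra" computation, cf.\ Reutenauer) shows that the element $\x_1\cdots\x_r + (-1)^r\x_r\cdots\x_1$ lies in the span of nontrivial shuffle products $\x_1\cdots\x_i\sh\x_{i+1}\cdots\x_r$ for $1\le i<r$. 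Applying $f$ and invoking the hypothesis then immediately yields $f(\x_1\cdots\x_r)+(-1)^r f(\x_r\cdots\x_1)=0$.

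Concretely I would argue as follows. Let $S(w)$ denote the image of a word $w$ under the antipode of the shuffle Hopf algebra, which on a word $\x_{n_1}\cdots\x_{n_r}$ equals $(-1)^r\,\x_{n_r}\cdots\x_{n_1}$. The defining property of the antipode, together with the fact that the only grouplike-type element in degree $0$ is the unit, gives for $r\ge 1$ the identity $w + S(w) = -\sum(\text{products of two shorter words under }\sh)$, where the sum runs over the reduced coproduct; for the staircase word $w=\x_1\cdots\x_r$ this is precisely a $\Q$-linear combination of the terms $\x_1\cdots\x_i\sh\x_{i+1}\cdots\x_r$ with $1\le i<r$ (after using the induction hypothesis to replace each shorter $\x_1\cdots\x_i + S(\x_1\cdots\x_i)$ recursively). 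Evaluating $f$ and using $f(\x_1\cdots\x_i\sh\x_{i+1}\cdots\x_r)=0$ kills the right-hand side, leaving $f(\x_1\cdots\x_r)+(-1)^r f(\x_r\cdots\x_1)=0$.

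\textbf{Main obstacle.} The only real subtlety is bookkeeping: making precise that, after recursively substituting the depth-$<r$ instances of the identity, the right-hand side is genuinely a combination of the \emph{shuffle products} $\x_1\cdots\x_i\sh\x_{i+1}\cdots\x_r$ and not merely of some words, so that the hypothesis applies verbatim. This is handled cleanly by phrasing everything in terms of the shuffle antipode recursion rather than by hand-expanding insertions; once that framework is in place the computation is routine and the induction closes immediately. (An alternative, equally short, is to note that $f$ satisfying all the shuffle relations means $f$, viewed as a linear functional on the shuffle algebra, is an algebra homomorphism to $\Q$ vanishing on words of length $\ge 1$ in the relevant sense, whence it is compatible with the antipode, giving the claim.)
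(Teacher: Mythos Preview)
Your overall strategy via the antipode of the shuffle Hopf algebra is exactly the standard argument the paper has in mind (the paper simply cites it as a known fact). However, one precise claim you make is false: $\x_1\cdots\x_r+(-1)^r\x_r\cdots\x_1$ is \emph{not} in general a $\Q$-linear combination of the specific elements $\x_1\cdots\x_i\sh\x_{i+1}\cdots\x_r$ for $1\le i<r$. Already for $r=3$ one checks directly that $\x_1\x_2\x_3-\x_3\x_2\x_1$ does not lie in the $2$-dimensional span of $\x_1\sh\x_2\x_3$ and $\x_1\x_2\sh\x_3$. What the antipode recursion actually gives is
\[
\x_1\cdots\x_r+(-1)^r\x_r\cdots\x_1=-\sum_{i=1}^{r-1}(-1)^i\,(\x_i\cdots\x_1)\sh(\x_{i+1}\cdots\x_r),
\]
with the \emph{reversed} left factor $\x_i\cdots\x_1$. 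Your proposed fix of ``recursively substituting the depth-$<r$ instances of the identity'' cannot close this gap: the lemma concerns $f^{(r)}$ alone for a fixed $r$, and there is no lower-depth hypothesis available to feed into an induction.

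The correct and easy repair is the relabelling you mention only in passing. The hypothesis $f(\x_1\cdots\x_i\sh\x_{i+1}\cdots\x_r)=0$ is an identity of rational functions in $x_1,\ldots,x_r$, hence remains true after any permutation of these variables; in particular $f\big((\x_i\cdots\x_1)\sh(\x_{i+1}\cdots\x_r)\big)=0$ for every $1\le i<r$. Applying $f$ to the displayed identity then yields the claim directly, with no induction. Your parenthetical ``alternative'' (viewing $f$ as an algebra homomorphism on the shuffle algebra) suffers from the same omission: to conclude that $f$ kills \emph{all} nontrivial shuffle products, not just the deconcatenation ones, one again needs this relabelling step.
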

\begin{proof}
This is a standard fact about the shuffle algebra (see e.g. \cite[Lemma 2.5.3]{Schneps:ARI}).
\end{proof}

Define $\cV \subset \cL$ to be the $\Q$-vector subspace of elements $f$ which satisfy
$
f+\varphi(f)=0,
$
where $\varphi$ is the linear involution of $\cL$, given for each depth $r$ component by
\[
\varphi(f)^{(r)}(x_1,\ldots,x_r)=(-1)^rf^{(r)}(x_r,\ldots,x_1).
\]
Note that by Lemma \ref{lem:anti}, we see that both $\dm_\cQ^\sharp$ and $\ls_\cQ^\sharp$ are subspaces of $\cV$.
We first prove that the space $\cV$ is a Lie subalgebra of $\cL$ with the ari bracket.
\begin{proposition} \label{prop:closedunderari}
The $\Q$-vector space $\cV$ is closed under $\{\phantom{\cdot},\phantom{\cdot}\}_{\rm ari}$.
\end{proposition}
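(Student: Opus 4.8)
The plan is to reduce the claim to a purely formal identity relating the involution $\varphi$ to the preari action $\cp_{\rm ari}$. Concretely, I would first look for a "twisted antisymmetry" formula of the shape
\[
\varphi(f \cp_{\rm ari} g) = \pm\, \varphi(g) \cp_{\rm ari} \varphi(f) \ \pm\ (\text{a term that is symmetric in }f,g),
\]
valid for all $f,g\in\cL$ (not just those in $\cV$). Such identities are standard in Ecalle's formalism: the swap/anti operators intertwine the two "halves" of the preari action, and the point is that when one antisymmetrizes to form $\{f,g\}_{\rm ari}=f\cp_{\rm ari}g-g\cp_{\rm ari}f$, any part of $\varphi(f\cp_{\rm ari}g)$ that is symmetric under $f\leftrightarrow g$ drops out. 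So the first step is to write down $\varphi(f^{(r)}\cp_{\rm ari} g^{(s)})$ explicitly using the two sums in Definition \ref{def:ari-bracket}, apply the substitution $(x_1,\ldots,x_{r+s})\mapsto(x_{r+s},\ldots,x_1)$ coming from $anti$, and reindex the summation variable $i\mapsto s-i$; this should turn the defining sums for $f^{(r)}\cp_{\rm ari}g^{(s)}$ into (a sign times) the corresponding sums for $\varphi(g^{(s)})\cp_{\rm ari}\varphi(f^{(r)})$ up to a boundary discrepancy between the ranges $0\le i\le s$ and $1\le i\le s$.

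The second step is to track that boundary discrepancy carefully. Reversing the order of variables swaps the roles of "$f$ attached on the left" and "$f$ attached on the right", and also swaps the index ranges $\{0,\ldots,s\}$ and $\{1,\ldots,s\}$ (equivalently $\{0,\ldots,r\}$ after the change of roles); the mismatch is precisely the two terms $i=0$ and $i=s$, which together form an expression that is manifestly invariant under exchanging $(f,r)\leftrightarrow(g,s)$ — it is the "mantar-type" correction and, crucially, symmetric. Granting the formula from Step 1 in the form
\[
\varphi\bigl(f\cp_{\rm ari}g\bigr) \;=\; \varphi(g)\cp_{\rm ari}\varphi(f) \;+\; S(f,g),\qquad S(f,g)=S(g,f),
\]
the conclusion is immediate: if $f,g\in\cV$, i.e. $\varphi(f)=-f$ and $\varphi(g)=-g$, then
\[
\varphi\bigl(\{f,g\}_{\rm ari}\bigr)
= \varphi(f\cp_{\rm ari}g) - \varphi(g\cp_{\rm ari}f)
= \bigl(\varphi(g)\cp_{\rm ari}\varphi(f) + S(f,g)\bigr) - \bigl(\varphi(f)\cp_{\rm ari}\varphi(g) + S(g,f)\bigr)
= (-g)\cp_{\rm ari}(-f) - (-f)\cp_{\rm ari}(-g)
= g\cp_{\rm ari}f - f\cp_{\rm ari}g = -\{f,g\}_{\rm ari},
\]
so $\{f,g\}_{\rm ari}\in\cV$.

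I expect the main obstacle to be Step 2, the bookkeeping of the boundary terms: one must be scrupulous about which variable becomes the "collapsed" argument $\sum_{j}x_j$ under the reversal, about the off-by-one in the two summation ranges in Definition \ref{def:ari-bracket}, and about the resulting sign, in order to see cleanly that the leftover is symmetric in $(f,g)$ rather than, say, only symmetric up to a further correction. (As a sanity check I would verify the identity in depths two and three against the explicit low-depth formulas, e.g. using $\phi_0$ in \eqref{eqn:phi_0}.) An alternative, if the direct manipulation gets unwieldy, is to invoke Ecalle's known behaviour of $\cp_{\rm ari}$ under $swap$ and $anti$ (as recorded in \cite{SalernoSchneps,Schneps:ARI}) and translate it into our normalization via the relations $swap^\flat(f)=anti(f^\flat)$ and $\varphi(f)=(-1)^{(\cdot)}anti(f)$ already established above; the cost is that one has to match conventions with the cited sources, which is exactly the kind of translation the rest of \S3 is set up to handle. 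No deeper input is needed — the whole statement is a formal consequence of the definition of $\cp_{\rm ari}$ and the fact that antisymmetrization kills the symmetric remainder.
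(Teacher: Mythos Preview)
Your Step~1 contains a structural error. Under the substitution $x_k\mapsto x_{r+s+1-k}$ and the reindexing $i\mapsto s-i$, the $f$-block in each term of $\varphi(f^{(r)}\cp_{\rm ari}g^{(s)})$ still occupies $r$ consecutive positions; it never becomes a $g$-block of length $s$. Hence $\varphi(f\cp_{\rm ari}g)$ cannot be matched term-by-term with $\varphi(g)\cp_{\rm ari}\varphi(f)$: already the two first sums have different lengths ($s+1$ versus $r+1$). What the reindexing actually accomplishes is to interchange the \emph{two sums within} $f\cp_{\rm ari}g$ --- the first sum of $\varphi(f\cp_{\rm ari}g)$ lines up with the second sum of $f\cp_{\rm ari}g$, with the arguments of $f$ and of $g$ each reversed. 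To turn those reversed arguments back into $f$ and $g$ one needs precisely the hypothesis $f+\varphi(f)=g+\varphi(g)=0$. This is exactly the paper's route: it expands $\{f,g\}_{\rm ari}+\varphi(\{f,g\}_{\rm ari})$ into eight sums and, using $f,g\in\cV$ from the outset, pairs the first with the sixth, the second with the fifth, etc.; the surviving boundary terms are the two ``mu-products'' $f(x_{s+1},\ldots,x_{r+s})g(x_1,\ldots,x_s)$ and $f(x_1,\ldots,x_r)g(x_{r+1},\ldots,x_{r+s})$ (plus their $\varphi$-images), which are symmetric in $(f,g)$ and cancel.

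It is worth noting that your target identity $\varphi(f\cp_{\rm ari}g)=\varphi(g)\cp_{\rm ari}\varphi(f)+S(f,g)$ with $S$ symmetric is \emph{equivalent} to the assertion that $\varphi$ is a Lie anti-automorphism of $(\cL,\{\cdot,\cdot\}_{\rm ari})$, i.e.\ that $-\varphi$ (Ecalle's ``mantar'') is a Lie automorphism. That statement is in fact true and strictly stronger than the proposition, but it is not obtained by the reindexing you describe; the matching between the two sides is cross-wise, mixing terms from $\varphi(f)\cp_{\rm ari}\varphi(g)$ and $\varphi(g)\cp_{\rm ari}\varphi(f)$. So either follow the paper and pair terms inside $f\cp_{\rm ari}g+\varphi(f\cp_{\rm ari}g)$ using the $\cV$-hypothesis, or carry out your backup plan and import the mantar identity from \cite{SalernoSchneps,Schneps:ARI} with a careful translation of conventions.
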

\begin{proof}
We need to show that, given $f,g \in \cL$ such that
$
f+\varphi(f)=g+\varphi(g)=0,
$
we have
\begin{equation} \label{eqn:Vstable}
\{f,g\}_{\rm ari}+\varphi(\{f,g\}_{\rm ari})=0.
\end{equation}
It is enough to check this for every component $f^{(r)}$ of $f$ and $g^{(s)}$ of $g$ separately, so, for simplicity of notation, let $f=f^{(r)}$ and $g=g^{(s)}$. 
Unraveling the definition of $\{\phantom{\cdot},\phantom{\cdot}\}_{\rm ari}$, we see that $\{f,g\}_{\rm ari}+\varphi(\{f,g \}_{\rm ari})$ equals
\begin{equation}\label{eqn:1}
\begin{aligned}
\Bigg( &\sum_{i=0}^sf(x_{i+1},\ldots,x_{i+r})g(x_1,\ldots,x_i,\sum_{j=i+1}^{i+r+1}x_j,x_{i+r+2},\ldots,x_{r+s})\\
&-\sum_{i=1}^s f(x_{i+1},\ldots,x_{i+r})g(x_1,\ldots,x_{i-1},\sum_{j=i}^{i+r}x_j,x_{i+r+1},\ldots,x_{r+s})\\
&-\sum_{i=0}^rg(x_{i+1},\ldots,x_{i+s})f(x_1,\ldots,x_i,\sum_{j=i+1}^{i+s+1}x_j,x_{i+s+2},\ldots,x_{r+s})\\
&+\sum_{i=1}^r g(x_{i+1},\ldots,x_{i+s})f(x_1,\ldots,x_{i-1},\sum_{j=i}^{i+r}x_j,x_{i+r+1},\ldots,x_{r+s})\Bigg)\\
&+(-1)^{r+s}\Bigg(\sum_{i=0}^s f(x_{r+s-i},\ldots,x_{s-i+1})g(x_{r+s},\ldots,x_{r+s-i+1},\sum_{j=s-i}^{r+s-i}x_j,x_{s-i-1},\ldots,x_1)\\
&-\sum_{i=1}^sf(x_{r+s-i},\ldots,x_{s-i+1})g(x_{r+s},\ldots,x_{r+s-i+2},\sum_{j=s-i+1}^{r+s-i+1} x_j,x_{s-i},\ldots,x_1)\\
&-\sum_{i=0}^r g(x_{r+s-i},\ldots,x_{r-i+1})f(x_{r+s},\ldots,x_{r+s-i+1},\sum_{j=r-i}^{r+s-i} x_j,x_{r-i-1},\ldots,x_1)\\
&+\sum_{i=1}^rg(x_{r+s-i},\ldots,x_{r-i+1})f(x_{r+s},\ldots,x_{r+s-i+2},\sum_{j=r-i+1}^{r+s-i+1}x_j,x_{r-i},\ldots,x_1)\Bigg).
\end{aligned}
\end{equation}
Using that $f+\varphi(f)=g+\varphi(g)=0$, we see that the first and the sixth, the second and fifth, the third and eighth as well as the fourth and the seventh sum simplify, and therefore \eqref{eqn:1} equals
\begin{equation*}
\begin{aligned}
&f(x_{s+1},\ldots,x_{r+s})g(x_1,\ldots,x_s)+(-1)^{r+s}f(x_r,\ldots,x_1)g(x_{r+s},\ldots,x_{r+1})\\
&-g(x_{r+1},\ldots,x_{r+s})f(x_1,\ldots,x_r)-(-1)^{r+s}g(x_s,\ldots,x_1)f(x_{r+s},\ldots,x_{s+1})=0,
\end{aligned}
\end{equation*}
where we again used that $f+\varphi(f)=g+\varphi(g)=0$, and \eqref{eqn:Vstable} follows.
\end{proof}

\begin{proposition}\label{prop:V}
For all $f,g\in \cV$, we have
$
\{f,g\}_{\rm ari}^{\flat}=\{ f^{\flat},g^{\flat} \}.
$
In particular, the space $\cV^{\flat}:=\{ f^{\flat} \in \cL\mid f\in \cV \}$ is closed under the Ihara bracket $\{\phantom{\cdot},\phantom{\cdot}\}$ and the map $f\mapsto f^{\flat}$ induces an isomorphism of Lie algebras 
\begin{equation}\label{eqn:isom}
(\cV,\{\phantom{\cdot},\phantom{\cdot}\}_{\rm ari}) \cong (\cV^{\flat}, \{\phantom{\cdot},\phantom{\cdot}\} ).
\end{equation}
\end{proposition}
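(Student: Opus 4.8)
The plan is to deduce the proposition from the stronger pre-Lie level identity
\[
(f\cp_{\rm ari} g)^{\flat}=f^{\flat}\cp g^{\flat},
\]
valid for every $f\in\cV$ and \emph{every} $g\in\cL$. Granting this, the proposition follows formally: if $f,g\in\cV$ then both $(f\cp_{\rm ari} g)^{\flat}=f^{\flat}\cp g^{\flat}$ and $(g\cp_{\rm ari} f)^{\flat}=g^{\flat}\cp f^{\flat}$ hold, and subtracting gives $\{f,g\}_{\rm ari}^{\flat}=\{f^{\flat},g^{\flat}\}$; moreover $\flat$ is an automorphism of $\cQ$, so $f\mapsto f^{\flat}$ is a linear isomorphism $\cV\stackrel{\cong}{\longrightarrow}\cV^{\flat}$, and since $\{f,g\}_{\rm ari}\in\cV$ by Proposition \ref{prop:closedunderari} we get $\{f^{\flat},g^{\flat}\}\in\cV^{\flat}$, i.e. $\cV^{\flat}$ is closed under the Ihara bracket and the map is a Lie algebra isomorphism. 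By $\Q$-bilinearity and continuity of $\cp_{\rm ari}$, $\cp$ and $\flat$, it suffices to prove the pre-Lie identity when $f=f^{(r)}$ and $g=g^{(s)}$ are concentrated in single depths, so that both sides live in depth $r+s$.

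To establish the pre-Lie identity I would apply $\flat$ directly to the explicit formula for $f^{(r)}\cp_{\rm ari} g^{(s)}$, i.e. substitute $x_j\mapsto x_j-x_{j-1}$ (with the convention $x_0:=0$) into each argument occurring there, and then compare with the formula \eqref{eqn:linIhara} for the linearized Ihara action applied to the moulds $f^{\flat}$ and $g^{\flat}$. Two elementary observations make the comparison go through: the ``merged'' arguments telescope,
\[
\textstyle\sum_{j=i+1}^{i+r+1}(x_j-x_{j-1})=x_{i+r+1}-x_i,\qquad \sum_{j=i}^{i+r}(x_j-x_{j-1})=x_{i+r}-x_{i-1},
\]
and, straight from \eqref{eqn:flat},
\[
f^{(r)}(x_{i+1}-x_i,\,x_{i+2}-x_{i+1},\,\ldots,\,x_{i+r}-x_{i+r-1})=(f^{\flat})^{(r)}(x_{i+1}-x_i,\,x_{i+2}-x_i,\,\ldots,\,x_{i+r}-x_i),
\]
while the two substituted $g^{(s)}$-arguments turn into $(g^{\flat})^{(s)}(x_1,\ldots,x_i,x_{i+r+1},\ldots,x_{r+s})$ and $(g^{\flat})^{(s)}(x_1,\ldots,x_{i-1},x_{i+r},\ldots,x_{r+s})$ respectively. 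With these identifications the first sums of $(f\cp_{\rm ari} g)^{\flat}$ and of $f^{\flat}\cp g^{\flat}$ agree term by term, with no further input.

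The hypothesis $f\in\cV$ is needed only for matching the second (``inner'') sums. There the $g^{\flat}$-factors already coincide, but the relevant factor of $f^{\flat}$ is $(f^{\flat})^{(r)}(x_{i+1}-x_i,\ldots,x_{i+r}-x_i)$ on the $\cp_{\rm ari}$ side and $(-1)^{r}(f^{\flat})^{(r)}(x_{i+r}-x_{i+r-1},\ldots,x_{i+r}-x_i)$ on the $\cp$ side; writing $v_k:=x_{i+k}-x_{i+k-1}$ these equal $f^{(r)}(v_1,\ldots,v_r)$ and $(-1)^{r}f^{(r)}(v_r,\ldots,v_1)$. The relation $f+\varphi(f)=0$ defining $\cV$ unwinds to $f^{(r)}(v_1,\ldots,v_r)=-(-1)^{r}f^{(r)}(v_r,\ldots,v_1)$, and together with the overall minus sign in front of the second sum of $\cp_{\rm ari}$ this makes the two second sums agree. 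This completes the verification of the pre-Lie identity, hence of the proposition.

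I do not anticipate a serious obstacle: the argument is a change of variables followed by a head-to-head comparison of two finite sums. The only genuinely non-formal point is the telescoping-plus-reversal mechanism linking \eqref{eqn:linIhara} to the preari formula — in particular the pleasant fact that the involution defining $\cV$ is exactly what absorbs the sign $(-1)^{r}$ that the Ihara action attaches to its inner sum (and that $\cV$ is used only on the left argument of $\cp_{\rm ari}$). The main care required is in the index bookkeeping under $x_j\mapsto x_j-x_{j-1}$.
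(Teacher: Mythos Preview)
Your proposal is correct and follows essentially the same route as the paper: you reduce to the stronger pre-Lie identity $(f\cp_{\rm ari} g)^{\flat}=f^{\flat}\cp g^{\flat}$ for $f\in\cV$, $g\in\cL$, apply $\flat$ term-by-term to the preari formula, and match it against \eqref{eqn:linIhara} using telescoping on the $g$-arguments and the $\cV$-identity $f^{(r)}(v_1,\ldots,v_r)=-(-1)^r f^{(r)}(v_r,\ldots,v_1)$ to handle the second sum. The paper's proof is the same computation, including the observation that the hypothesis on $g$ can be relaxed to $g\in\cL$.
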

 \begin{proof}
We will prove the slightly stronger result
\begin{equation} \label{eqn:stronger}
(f\cp_{\rm ari}g)^{\flat}=f^{\flat}\cp g^{\flat}, \quad \mbox{for all } f\in \cV, \, g\in \cL.
\end{equation}
Computing the left hand side of \eqref{eqn:stronger} gives
\begin{equation}
\begin{aligned}
&\sum_{i=0}^s f^{(r)}(x_{i+1:i},\ldots,x_{i+r:i+r-1}) g^{(s)}(x_1,\ldots,x_{i:i-1},x_{i+r+1:i}, x_{i+r+2:i+r+1},\ldots, x_{r+s:r+s-1})  \\
&-\sum_{i=1}^s f^{(r)}(x_{i+1:i},\ldots,x_{i+r:i+r-1}) g^{(s)}(x_1,\ldots,x_{i-1:i-2},x_{i+r:i-1}, x_{i+r+1:i+r},\ldots, x_{r+s:r+s-1})
\end{aligned}
\end{equation}
where $x_{j:k}:=x_j-x_k$. On the other hand, the right hand side of \eqref{eqn:stronger} is equal to
\begin{equation}
\begin{aligned}
&\sum_{i=0}^s f^{(r)} (x_{i+1:i},x_{i+2:i+1},\ldots,x_{i+r:i+r-1})g^{(s)} (x_1,\ldots,x_{i:i-1},x_{i+r+1:i},\ldots,x_{r+s:r+s-1}) \\
& +(-1)^{r}\sum_{i=1}^s f^{(r)}(x_{i+r:i+r-1},x_{i+r-1:i+r-2},\ldots,x_{i+1:i})g^{(s)}(x_1,\ldots,x_{i-1:i-2},x_{i+r:i-1},\ldots,x_{r+s:r+s-1}).
\end{aligned}
\end{equation}
Finally, using that $f+\varphi(f)=0$, we get
\begin{equation}
-f^{(r)}(x_{i+1:i},x_{i+2;i+1},\ldots,x_{i+r:i+r-1})=(-1)^{r}f^{(r)}(x_{i+r:i+r-1},\ldots,x_{i+2;i+1},x_{i+1:i})
\end{equation}
and \eqref{eqn:stronger} follows. The fact that $\cV^{\flat}$ is a Lie algebra under the Ihara bracket follows from this together with Proposition \ref{prop:closedunderari}.
\end{proof}

We note that the isomorphism \eqref{eqn:isom} does not extend to an isomorphism between $(\cL,\{\phantom{\cdot},\phantom{\cdot}\})$ and $(\cL,\{\phantom{\cdot},\phantom{\cdot}\}_{\rm ari})$, e.g. for $f(x_1,x_2)=x_1+x_2 \notin \cV$ and $g(x_1)=x_1$, we have
\begin{equation}
\{f,g\}_{\rm ari}^{\flat}-\{ f^{\flat},g^{\flat} \}=2(x_1 - x_3)x_3 \neq 0.
\end{equation}

\begin{remark}
A result similar to Proposition \ref{prop:V} was obtained by Racinet \cite[Corollaire A.5.4]{Racinet:these} in the non-commutative setting, however our conventions for the Ihara and ari bracket are slightly different so that we cannot use his result here. 
\end{remark}

\subsection{Proof of Theorem \ref{thm:dmls}}

We are now in a position to prove Theorem \ref{thm:dmls}.

\begin{proof}[Proof of Theorem \ref{thm:dmls}]
Note that the defining equation of the space $\cV^{\flat}$ is given by
\begin{equation}\label{eqn:v^flat}
g^{(r)}(x_1,\ldots,x_r)=(-1)^{r-1}g^{(r)}(x_r-x_{r-1},x_r-x_{r-2},\ldots,x_r-x_1,x_r).
\end{equation}
For $f\in \ls_\cQ$, since $f^\sharp (\x_1\cdots \x_i\sh \x_{i+1}\cdots \x_r)=0$ for $1\le i<r$, by Lemma \ref{lem:anti} we have $f^\sharp(\x_1\cdots \x_r)+(-1)^rf^\sharp(\x_r\cdots \x_1)=0$.
Therefore, taking $\flat$, we get 
\[(f^\sharp)^\flat(\x_1\cdots\x_r) = (-1)^{r-1}\big(anti(f^\sharp)\big)^\flat(\x_1\cdots\x_r),\]
which has the same form as \eqref{eqn:v^flat}, where $anti$ is defined in \eqref{eqn:anti}.
Thus, $\ls_\cQ\subset \cV^\flat$.
Similarly, one has $\dm_\cQ\subset \cV^\flat$.
By Proposition \ref{prop:V}, the Lie isomorphism $\flat: (\cV,\{\phantom{\cdot},\phantom{\cdot}\}_{\rm ari})\rightarrow (\cV^\flat,\{\phantom{\cdot},\phantom{\cdot}\})$ induces isomorphisms
\[
(\dm^{\sharp}_\cQ,\{ \phantom{\cdot},\phantom{\cdot}\}_{\rm ari}) \stackrel{\flat}{\longrightarrow} (\dm_\cQ,\{ \phantom{\cdot},\phantom{\cdot}\})  \ \mbox{and}\ (\ls^{\sharp}_\cQ,\{ \phantom{\cdot},\phantom{\cdot}\}_{\rm ari}) \stackrel{\flat}{\longrightarrow} (\ls_\cQ,\{ \phantom{\cdot},\phantom{\cdot}\})
\]
of Lie algebras.
Thus, Theorem \ref{thm:dmls} follows from Theorem \ref{thm:lie-ls} and Corollary \ref{cor:lie-dm}.
\end{proof}

\begin{remark}\label{rmk:another-proof}
Another proof of Theorem \ref{thm:dmls} is obtained by a version of Racinet's theorem \cite[Proposition 4.A.i]{Racinet}, together with Brown's argument \cite[\S16.1]{Brown:Anatomy} (see also \cite[Theorem 9.2]{Brown:Anatomy}). Ecalle's theory gives another approach to Racinet's theorem.
\end{remark}

\section{Comparison of Ecalle's and Brown's polar solutions}

In this section, we first define Ecalle's and Brown's polar solutions, and then, prove Theorem \ref{thm:main}.

\subsection{Ecalle's polar solutions}
Transposing $\Ad_{\rm ari}(P)$ to a map from $\ls_\cQ$ to $\dm_\cQ$ via Proposition \ref{prop:V}, we arrive at the following definition.
\begin{definition}\label{def:chi_E}
Define a morphism of Lie algebras $\chi_E: \ls_\cQ \rightarrow \dm_\cQ$ by the following commutative diagram
\[\begin{CD}
 \ls_\cQ^{\sharp} @>\Ad_{\rm ari}(P)>>   \dm_\cQ^{\sharp}   \\
@A\sharp AA       @V\flat VV\\
\ls_\cQ @>\chi_E>>  \dm_\cQ
\end{CD}.\]
\end{definition}
We can give an explicit formula for $\chi_E$ as follows.
\begin{proposition}
For $f \in \ls_\cQ$, we have
\begin{equation}\label{eqn:def-chiE}
\chi_E(f)=\sum_{n=0}^{\infty}\frac{1}{n!}\ad^n(\phi_0^\flat)(f) = f + \{f,\phi_0^\flat\}+ \frac{1}{2} \{\{f,\phi_0^\flat\},\phi_0^\flat\} + \cdots,
\end{equation}
where $\ad$ denotes the adjoint action with respect to the Ihara bracket, i.e. $\ad^n(g)(f)= \{\ad^{n-1}(g)(f),g\} $ for $n\ge1$ and $\ad^0(g)(f)=f$.
\end{proposition}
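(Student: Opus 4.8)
The claim is an explicit formula for $\chi_E = \flat \circ \Ad_{\rm ari}(P) \circ \sharp$, and the natural strategy is to unwind the three maps in the commutative diagram one at a time and then invoke Proposition~\ref{prop:V} to transport the adjoint action from the ari-side to the Ihara-side. First I would recall that by definition $\Ad_{\rm ari}(P)(h) = \sum_{n\ge 0} \frac{1}{n!}\ad_{\rm ari}^n(\phi_0)(h)$, where $\phi_0 = \log_{\rm ari}(P)$ and $\ad_{\rm ari}^n(\phi_0)(h) = \{\ad_{\rm ari}^{n-1}(\phi_0)(h), \phi_0\}_{\rm ari}$ (our right-action convention). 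Thus for $f \in \ls_\cQ$, setting $h = f^\sharp$, we get
\[
\chi_E(f) = \Big(\sum_{n\ge 0}\frac{1}{n!}\ad_{\rm ari}^n(\phi_0)(f^\sharp)\Big)^{\flat}.
\]

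Next I would push the $\flat$ through the sum termwise (legitimate since $\flat$ is $\Q$-linear and continuous for the depth topology, and the sum converges in that topology because each $\ad_{\rm ari}$ raises depth). The key point is then to show, by induction on $n$, that
\[
\big(\ad_{\rm ari}^n(\phi_0)(f^\sharp)\big)^{\flat} = \ad^n(\phi_0^\flat)(f),
\]
where $\ad$ is the Ihara adjoint action. The base case $n=0$ is just $(f^\sharp)^\flat = f$, which holds since $\sharp$ and $\flat$ are mutually inverse. For the inductive step, I would apply Proposition~\ref{prop:V}: we have $\{a,b\}_{\rm ari}^\flat = \{a^\flat, b^\flat\}$ for all $a,b \in \cV$. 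Here one must check that the relevant moulds lie in $\cV$: $\phi_0 \in \cV$ because $\phi_0 = \log_{\rm ari}(P)$ and one verifies $\phi_0 + \varphi(\phi_0) = 0$ (this follows from the symmetry properties of $P$, or can be read off from \eqref{eqn:phi_0}); and $f^\sharp \in \ls_\cQ^\sharp \subset \cV$ by Lemma~\ref{lem:anti} (as already noted in the text). Since $\cV$ is closed under $\{\cdot,\cdot\}_{\rm ari}$ by Proposition~\ref{prop:closedunderari}, all iterated brackets $\ad_{\rm ari}^{n-1}(\phi_0)(f^\sharp)$ stay in $\cV$, so Proposition~\ref{prop:V} applies at each stage and the induction goes through: $\ad_{\rm ari}^n(\phi_0)(f^\sharp)^\flat = \{\ad_{\rm ari}^{n-1}(\phi_0)(f^\sharp), \phi_0\}_{\rm ari}^\flat = \{(\ad_{\rm ari}^{n-1}(\phi_0)(f^\sharp))^\flat, \phi_0^\flat\} = \{\ad^{n-1}(\phi_0^\flat)(f), \phi_0^\flat\} = \ad^n(\phi_0^\flat)(f)$.

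Summing over $n$ then yields $\chi_E(f) = \sum_{n\ge 0}\frac{1}{n!}\ad^n(\phi_0^\flat)(f)$, and writing out the first few terms gives the displayed expansion $f + \{f,\phi_0^\flat\} + \frac12\{\{f,\phi_0^\flat\},\phi_0^\flat\} + \cdots$. The main obstacle, such as it is, is the bookkeeping around the $\cV$-membership and the convention that $\ad$ is a right action: one has to be careful that the recursion $\ad^n(g)(f) = \{\ad^{n-1}(g)(f), g\}$ matches on both sides, i.e. that Proposition~\ref{prop:V} is being applied in the correct slot. Everything else is a routine application of results already established; no genuinely new computation is needed. I would also remark that the output automatically lands in $\dm_\cQ$ by Theorem~\ref{thm:Ecalle's-fundamental} together with the isomorphism $\flat$, so $\chi_E$ is well-defined as a map into $\dm_\cQ$, consistent with Definition~\ref{def:chi_E}.
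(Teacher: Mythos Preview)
Your proposal is correct and follows essentially the same route as the paper: unwind $\chi_E=\flat\circ\Ad_{\rm ari}(P)\circ\sharp$, then use Proposition~\ref{prop:V} (together with $f^\sharp\in\cV$ and $\phi_0\in\cV$, plus Proposition~\ref{prop:closedunderari} to keep iterates in $\cV$) to convert each $\ad_{\rm ari}^n(\phi_0)(f^\sharp)$ into $\ad^n(\phi_0^\flat)(f)$. One small sharpening: your justification that $\phi_0\in\cV$ ``can be read off from \eqref{eqn:phi_0}'' only covers depths $\le 3$; the clean argument, as in the paper, is to combine the shuffle identity \eqref{eq:phi0-shuffle} with Lemma~\ref{lem:anti}.
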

\begin{proof}
Unravelling the definition of $\chi_E$, we see that
\begin{equation}
\chi_E(f)=\left(\sum_{n=0}^{\infty}\frac{1}{n!}\ad^n(\phi_0)(f^{\sharp})\right)^{\flat}=(f^{\sharp})^{\flat}+\{f^{\sharp},\phi_0\}^{\flat}_{\rm ari}+\frac 12\{\{f^{\sharp},\phi_0\}_{\rm ari},\phi_0\}^{\flat}_{\rm ari} +\cdots .
\end{equation}
By \eqref{eq:phi0-shuffle} and Lemma \ref{lem:anti}, we have $\phi_0\in \mathcal{V}$.
Therefore, using the definition of $\Ad_{\rm ari}(P)$ together with Proposition \ref{prop:V}, we get the formula for $\chi_E(f)$ when $f\in \ls_\cQ$.
\end{proof}

Combining Theorem \ref{thm:Ecalle's-fundamental} with Proposition \ref{prop:V}, we get the next theorem.
\begin{theorem}\label{thm:chiE}
The morphism $\chi_E$ is an isomorphism of Lie algebras
\[
\chi_E: \ls_\cQ \stackrel{\cong}{\longrightarrow} \dm_\cQ.
\]
\qed
\end{theorem}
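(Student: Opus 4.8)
The plan is to assemble the statement from the pieces already in place, essentially by chasing the defining commutative diagram of $\chi_E$ (Definition \ref{def:chi_E}) and invoking the two main structural results: Ecalle's fundamental theorem (Theorem \ref{thm:Ecalle's-fundamental}) on the $\sharp$-side, and the bracket comparison (Proposition \ref{prop:V}) that transports everything to the $\flat$-normalization used for $\dm_\cQ$ and $\ls_\cQ$. First I would recall that the vertical maps $\sharp\colon \ls_\cQ\to\ls_\cQ^\sharp$ and $\flat\colon \dm_\cQ^\sharp\to\dm_\cQ$ are bijective (indeed $\sharp$ and $\flat$ are mutually inverse automorphisms of $\cQ$, as noted in \S\ref{ssec:2.2.1}), and that by Theorem \ref{thm:Ecalle's-fundamental} the middle horizontal map $\Ad_{\rm ari}(P)\colon \ls_\cQ^\sharp\to\dm_\cQ^\sharp$ is a bijection. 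Hence $\chi_E=\flat\circ\Ad_{\rm ari}(P)\circ\sharp$ is a composite of three bijections, so it is a bijection $\ls_\cQ\to\dm_\cQ$.

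Next I would verify that $\chi_E$ is a homomorphism for the Ihara bracket. The key point is that $\phi_0=\log_{\rm ari}(P)\in\cV$: this follows from the shuffle relation \eqref{eq:phi0-shuffle} together with Lemma \ref{lem:anti}, as already recorded in the proof of the explicit-formula proposition. Therefore $\Ad_{\rm ari}(P)$ preserves $\cV$ (by Proposition \ref{prop:closedunderari}, since the adjoint action is built from iterated ari-brackets with $\phi_0$), and both $\ls_\cQ^\sharp$ and $\dm_\cQ^\sharp$ sit inside $\cV$ by Lemma \ref{lem:anti}. Now Theorem \ref{thm:Ecalle's-fundamental} tells us $\Ad_{\rm ari}(P)$ is a Lie isomorphism for the ari bracket between $\ls_\cQ^\sharp$ and $\dm_\cQ^\sharp$, and Proposition \ref{prop:V} tells us that on all of $\cV$ the map $\flat$ intertwines $\{\,\cdot\,,\,\cdot\,\}_{\rm ari}$ with $\{\,\cdot\,,\,\cdot\,\}$; composing with the (trivially Lie-compatible, since it is just the inverse normalization) map $\sharp$ on the source, one concludes that $\chi_E$ carries $\{\,\cdot\,,\,\cdot\,\}$ on $\ls_\cQ$ to $\{\,\cdot\,,\,\cdot\,\}$ on $\dm_\cQ$. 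Concretely, this is exactly the content of the displayed explicit formula \eqref{eqn:def-chiE}: $\chi_E(f)=\sum_n \tfrac{1}{n!}\ad^n(\phi_0^\flat)(f)$, which manifestly respects the bracket because it is an exponential-of-derivation.

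The only genuinely non-formal ingredients are borrowed wholesale: that $\dm_\cQ^\sharp$ really does coincide with Ecalle's $ARI_{\underline{al}/\underline{il}}$ (checked in the proof of Theorem \ref{thm:Ecalle's-fundamental} via the same shuffle-reversal argument as in Theorem \ref{thm:lie-ls}), and that $\Ad_{\rm ari}(P)$ lands in $\dm_\cQ^\sharp$ (this is \cite[Theorem 7.2]{SalernoSchneps}, i.e. the $ARI_{\underline{al}/\underline{al}}\cong ARI_{\underline{al}/\underline{il}}$ statement). I expect the main obstacle — such as it is — to be purely bookkeeping: making sure the normalizations match, i.e. that the $\sharp$ on the left edge of the diagram is the correct one to cancel the $\flat$ hidden inside Proposition \ref{prop:V} when it is applied to $f^\sharp\in\ls_\cQ^\sharp$ rather than to an arbitrary element of $\cV$, and that no spurious poles are introduced. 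Since $\ls_\cQ\subset\cV^\flat$ and $\dm_\cQ\subset\cV^\flat$ were both established in the proof of Theorem \ref{thm:dmls}, the diagram of Definition \ref{def:chi_E} is well-defined with all four corners as stated, and the theorem follows by combining Theorem \ref{thm:Ecalle's-fundamental} with Proposition \ref{prop:V} exactly as announced.
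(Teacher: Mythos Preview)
Your proposal is correct and follows exactly the approach the paper takes: the theorem is stated as an immediate consequence of combining Theorem \ref{thm:Ecalle's-fundamental} with Proposition \ref{prop:V}, and your write-up is simply a careful unpacking of that one-line deduction via the commutative diagram in Definition \ref{def:chi_E}. The only difference is that you spell out the bookkeeping (that $\sharp,\flat$ are mutual inverses, that $\phi_0\in\cV$, that $\ls_\cQ,\dm_\cQ\subset\cV^\flat$) which the paper leaves implicit.
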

Applying $\chi_E$ to the canonical depth one element $x_1^{2k} \in \ls_\cQ$, we obtain a solution $\eta_{2k+1}:=\chi_E(x_1^{2k})$ to the double shuffle equations modulo products in weight $-2k-1$.

\subsection{Brown's polar solutions}

The following definitions are taken from \cite[\S14]{Brown:Anatomy}. For an integer $r\geq 1$, define a rational function $s_r\in \Q(x_1,\ldots,x_r)$ by
\begin{equation}
s_r=\sum_{i=0}^{r-1}(r-i) \prod_{0 \leq j \leq r, \, j \neq i}\frac{1}{(x_j-x_i)},
\end{equation}
where we set $x_0=0$.
We also define the element $\psi_0 \in \cL$ by
\[
\psi^{(r)}_0=\binom{r+1}{2}^{-1}s_r.
\]
It is shown in \cite[Proposition 14.8]{Brown:Anatomy} that the $s_r$ satisfy the identity $\{s_m,s_n\}=(m-n)s_{m+n}$. This implies that for $r\geq 3$, we have the ``Witt identity''.
\begin{equation}\label{eqn:formula-psi0}
\psi_0^{(r)}=\frac{r-1}{(r-2)(r+1)}\{\psi_0^{(1)},\psi_0^{(r-1)}\}.
\end{equation}
In particular, it follows that $\psi_0^{(r)}\in \Q^\times \ad^{r-2}(\psi_0^{(1)})(\psi_0^{(2)})$. The first few values of $\psi^{(r)}_0$ are given explicitly by
\begin{equation}\label{eqn:psi_0}
\begin{aligned}
\psi^{(1)}_0(x_1)&=\frac{1}{x_1},\quad \psi^{(2)}_0(x_1,x_2)=\frac{2 x_1-x_2}{3 x_1 (x_1-x_2) x_2},\\
\psi^{(3)}_0(x_1,x_2,x_3)&=\frac{3 x_2 x_1^2-2
   x_3 x_1^2-3 x_2^2 x_1+2 x_3^2 x_1-x_2 x_3^2+x_2^2 x_3}{6 x_1x_2x_3 (x_1-x_2)
(x_1-x_3) (x_2-x_3)}.
\end{aligned}
\end{equation}

\begin{remark}
It is announced in \cite[Theorem 14.2]{Brown:Anatomy} that the element $\psi_0$ satisfies the double shuffle equations modulo products \eqref{eqn:dm}. In particular, this means that $\psi_0$ substantially differs from Ecalle's $\phi_0^{\flat}$ (see Remark \ref{rmk:phi0}).
\end{remark}

With $\psi_0$, we can now define Brown's lift $\chi_B$.
\begin{definition} \label{dfn:constructionChiE}
For $f=f^{(d)}\in \gr^{(d)}\ls_\cQ$, define an element $\chi_B(f) \in \cL$ inductively by $\chi_B(f)^{(i)}=0$ for $i<d$, $\chi_B(f)^{(d)}:=f^{(d)}$ and 
\[
\chi_B(f)^{(d+r)}:=\frac{1}{2r}\sum_{i=1}^{r}\{ \psi_0^{(i)},\chi_B(f)^{(d+r-i)} \}
\]
for $r\geq 1$.
Extending $\chi_B$ to all of $\ls_\cQ$ by continuity and linearity, we obtain an injective linear map 
\[
\chi_B: \ls_\cQ \longrightarrow \cQ.
\]
\end{definition}

The $\chi_B(f)$ is denoted by $\widetilde{f}$ in \cite[Definition 14.3]{Brown:Anatomy}.
Clearly, the image of $\chi_B$ lies in the subspace $\cL \subset \cQ$. 
The following theorem is announced in \cite[Theorem 14.4]{Brown:Anatomy}.
\begin{theorem} \label{thm:Brown}
For $f\in\ls_\cQ$, we have $\chi_B(f) \in\dm_\cQ$, i.e. the element constructed in Definition \ref{dfn:constructionChiE} solves the double shuffle equations modulo products. \qed
\end{theorem}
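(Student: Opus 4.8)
My plan is to verify the three conditions of Definition~\ref{dfn:dm} for $\chi_B(f)$ separately. Conditions (i) and (ii) are immediate: by construction $\chi_B(f)^{(i)}=0$ for $i<d$, and if $d=1$ then $\chi_B(f)^{(1)}=f^{(1)}$ is even because $f\in\ls_\cQ$. So the whole content is condition (iii) — that $\chi_B(f)$ solves the double shuffle equations \eqref{eqn:dm} — and I would prove this by induction on the depth. It is convenient to record first that the recursion in Definition~\ref{dfn:constructionChiE} is equivalent to the single identity $2(D-d)\,\chi_B(f)=\{\psi_0,\chi_B(f)\}$, where $D\colon\cL\to\cL$ multiplies the depth $r$ component by $r$; on each depth component the right-hand side is a finite sum (the Ihara bracket raises depth) and $D-d$ is invertible on $\cQ^{(d+1)}$, so together with $\chi_B(f)^{(d)}=f$ this determines $\chi_B(f)$. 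The one substantial external input is the fact, announced by Brown \cite[Theorem~14.2]{Brown:Anatomy}, that $\psi_0$ itself satisfies \eqref{eqn:dm}; note that $\psi_0\notin\dm_\cQ$ only because its depth one component $1/x_1$ is odd, which does not interfere, the Ihara bracket being zero in depth one.

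For the shuffle half of \eqref{eqn:dm} I would pass to $\sharp$-coordinates and the ari bracket. By Proposition~\ref{prop:V}, in the equivalent form $\{u,v\}^\sharp=\{u^\sharp,v^\sharp\}_{\rm ari}$ valid whenever $u^\sharp,v^\sharp\in\cV$, the recursion takes the shape $\big(\chi_B(f)^{(d+r)}\big)^\sharp=\frac{1}{2r}\sum_{i=1}^r\{(\psi_0^{(i)})^\sharp,(\chi_B(f)^{(d+r-i)})^\sharp\}_{\rm ari}$. Now $\psi_0^\sharp$ is alternal by the shuffle half of \cite[Theorem~14.2]{Brown:Anatomy} and hence lies in $\cV$ by Lemma~\ref{lem:anti}, while $f^\sharp$ is alternal and in $\cV$ because $f\in\ls_\cQ$. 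Since the space of alternal moulds is a Lie subalgebra of $(\cL,\{\cdot,\cdot\}_{\rm ari})$ — Ecalle's $ARI_{\underline{al}}$; compare \cite{SalernoSchneps,Schneps:ARI} and the proof of Theorem~\ref{thm:lie-ls} — an induction on $r$ (base case $r=0$: $\chi_B(f)^{(d)\sharp}=f^\sharp$) shows that every $\big(\chi_B(f)^{(d+r)}\big)^\sharp$ is alternal, which is precisely the shuffle half of \eqref{eqn:dm}.

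The remaining, and main, difficulty is the stuffle half $\chi_B(f)(\x_1\cdots\x_i\opast\x_{i+1}\cdots\x_r)=0$. Unlike the shuffle equations these are not homogeneous in the depth — the contraction term $\frac{\x_n-\x_m}{x_n-x_m}$ in $\opast$ couples $\chi_B(f)^{(r)}$ with all lower-depth components — so that $\chi_B$ genuinely transforms the second condition, carrying $\ls_\cQ^\sharp=ARI_{\underline{al}/\underline{al}}$ to $\dm_\cQ^\sharp=ARI_{\underline{al}/\underline{il}}$ (cf.\ the proof of Theorem~\ref{thm:Ecalle's-fundamental}); this is exactly the phenomenon that, for $\chi_E$, is governed by the special properties of $P=\exp_{\rm ari}(\phi_0)$ in Ecalle's theorem. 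I would mimic that argument: transport the stuffle condition through $\sharp$ and Ecalle's swap $swap^\flat$ into an alternility statement, then rerun the inductive argument of the previous paragraph in these coordinates, using that $\psi_0$ also satisfies the stuffle equations (again \cite[Theorem~14.2]{Brown:Anatomy}), that $f$ satisfies $f(\x_1\cdots\x_i\sh\x_{i+1}\cdots\x_r)=0$ (the part of the definition of $\ls_\cQ$ that, after the swap, plays the role ``$f(\opast)=0$'' would have played), and the stability of the relevant alternal/alternil space under the appropriate Ecalle bracket (as in \cite{SalernoSchneps,Schneps:ARI}). The hard step I anticipate is technical but unavoidable: after transporting through $\sharp$ and $swap^\flat$ one must check that the recursion for $\chi_B(f)$ still has the form ``iterated bracket against the transported $\psi_0$'' with coefficients recording the depth, and the combinatorics of the swap together with the depth-inhomogeneity of the stuffle equations make this the delicate point; alternatively one follows Brown's own derivation \cite[\S14]{Brown:Anatomy}. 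A tempting shortcut via Theorem~\ref{thm:chiE} — writing $\chi_B=\chi_E\circ S$ for some operator $S$ on $\ls_\cQ$ — is circular, since $\dm_\cQ$ is only known to equal the image of $\chi_E$ once the present statement holds.
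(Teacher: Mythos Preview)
The paper does not prove this theorem. It is stated with a bare \qed and attributed to Brown's announcement \cite[Theorem~14.4]{Brown:Anatomy}; no argument is given anywhere in the paper. So there is no proof to compare against --- you are attempting strictly more than the paper does.

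On the merits of your sketch: the shuffle half is essentially sound. Granting the announced fact that $\psi_0^\sharp$ is alternal (\cite[Theorem~14.2]{Brown:Anatomy}), Proposition~\ref{prop:V} converts the recursion into iterated ari brackets against $\psi_0^\sharp$, and closure of alternal moulds under the ari bracket then gives the induction you describe. The genuine gap is the stuffle half. Your plan is to transport through $\sharp$ and $swap^\flat$ and rerun the same induction ``in alternil coordinates'', but the swap does not carry the ari bracket to itself; after swapping, the recursion for $\chi_B(f)$ is no longer of the form ``iterated bracket against a fixed element'' for any bracket under which the alternil condition is known to be stable, so the closure results of \cite{SalernoSchneps,Schneps:ARI} do not apply as stated. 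You correctly flag this as ``the delicate point'' without resolving it, and your fallback (``alternatively one follows Brown's own derivation'') is precisely the announcement the paper already cites. Note also that your entire argument rests on \cite[Theorem~14.2]{Brown:Anatomy}, which is itself only announced; so even if the stuffle step could be completed along these lines, the proof would not be self-contained. In summary: neither you nor the paper proves the stuffle half, and that is where the whole content of the theorem lies.
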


Assuming the above theorem, we see that the linear map $\chi_B$ is an isomorphism\footnote{This was pointed out by Brown in a private discussion with the authors.}, since its inverse $\chi_B^{-1}$ can be described as follows (cf. \cite[Theorem 14.9]{Brown:Anatomy}). 
Given $(f^{(r)}) \neq 0\in \dm_\cQ$, let $d$ be minimal such that $f^{(d)}\neq 0$. 
Then define an element $g:=\chi_B^{-1}(f)$ recursively by
\[
g^{(d+r)}:=\begin{cases}0& r<0\\ f^{(d)} & r=0 \\ f^{(d+r)}-\chi_B(g)^{(d+r)} & r>0.\end{cases}
\]
This is well-defined, as the depth $r$ component $\chi_B(g)^{(r)}$ only involves $g^{(n)}$, for $n<d+r$. 
It is easily seen that $(g^{(r)}) \in \cL$ satisfies the linearized double shuffle equations and that $\chi_B(g)=f$. 
Since $\chi_B$ is obviously injective, this shows that $\chi_B$ is an isomorphism of $\Q$-vector spaces. 
We do not know if $\chi_B$ is an isomorphism of Lie algebras, but from the comparison with $\chi_E$ (which is a Lie algebra isomorphism!), it will follow that this is true at least in depths $\leq 3$.

\begin{remark}
The images $\chi_B(x_1^{2k})$ of the canonical depth one elements $x_1^{2k} \in \gr^{(1)}\ls_\cQ$ are denoted by $\xi_{2k+1}$ in \cite[Definition 5.1]{Brown:depth3}.\footnote{There is a typo in the definition of $\xi_{2k+1}$; $s^{(2)}$ should be $s^{(2)}/2$.} 
Interestingly, they satisfy relations; the first example is $\{\xi_3,\xi_9\}-3\{\xi_5,\xi_7\}=0$.
Moreover, for $k\geq -1$, the $\xi_{2k+1}$ coincide with the images of certain geometric derivations $\varepsilon_{2k+2}^\vee$ under a Lie algebra isomorphism $\ell' : B^1 {\rm Der}^{\varTheta} \Lie(a,b)\rightarrow (\cL,\{,\})$ up to depth four (see \cite[\S6]{Brown:depth3}). 
\end{remark}

\subsection{Comparing the two lifts}
We now prove Theorem \ref{thm:main}.

\begin{proof}[Proof of Theorem \ref{thm:main}]
Let $f\in \gr^{(d)}\ls_\cQ$.
For $r\ge1$, put $\overline{\psi}_0^{(d)}=-\frac{1}{2^d}\psi_0^{(r)}$. 
It follows from the definition of $\chi_B$ that
\begin{align*}
&\chi_B(f)^{(d)}=f, \quad \chi_B(f)^{(d+1)}=\{f,\overline{\psi}^{(1)}_0 \}, \\
&\chi_B(f)^{(d+2)}=\{f, \overline{\psi}^{(2)}_0 \}+\frac12 \{\{f,\overline{\psi}^{(1)}_0 \},\overline{\psi}^{(1)}_0\}.
\end{align*}
For $\chi_B(f)^{(d+3)}$, using $\overline{\psi}^{(3)}_0=\frac 12\{ \overline{\psi}^{(2)}_0,\overline{\psi}^{(1)}_0 \}$ (see \eqref{eqn:formula-psi0}) and the Jacobi identity, we have
\[\chi_B(f)^{(d+3)}=\{\{f, \overline{\psi}^{(2)}_0 \},\overline{\psi}^{(1)}_0\}+\frac{1}{3!} \{\{ \{f,\overline{\psi}^{(1)}_0 \}, \overline{\psi}^{(1)}_0 \},\overline{\psi}^{(1)}_0 \}.\]
On the other hand, by \eqref{eqn:def-chiE}, we have
\begin{align*}
\chi_E(f)^{(d)}&=f, \quad \chi_E(f)^{(d+1)}=\{f,\big(\phi_0^{(1)}\big)^\flat\},\\
 \chi_E(f)^{(d+2)}&=\{ f,\big(\phi_0^{(2)}\big)^\flat \}+\frac12\{\{f,\big(\phi_0^{(1)}\big)^\flat \},\big(\phi_0^{(1)}\big)^\flat\}.
\end{align*}
From \eqref{eqn:phi_0} and \eqref{eqn:psi_0}, it can be shown that for $r=1,2$, we have $(\phi_0^{(r)})^{\flat}=\overline{\psi}^{(r)}_0$, and hence, $\chi_E(f)^{(d+r)}=\chi_B(f)^{(d+r)}$ for $r=0,1,2$. It is straightforward to verify that the element $\phi_0$ satisfies the extra identity $\phi^{(3)}_0=\frac 12\{ \phi^{(2)}_0,\phi^{(1)}_0 \}_{\rm ari}$.
With this and the Jacobi identity one computes
\[ \chi_E(f)^{(d+3)}=\{\{f, \big(\phi_0^{(2)}\big)^\flat \},\big(\phi_0^{(1)}\big)^\flat \}+\frac{1}{3!} \{\{ \{f,\big(\phi_0^{(1)}\big)^\flat \}, \big(\phi_0^{(1)}\big)^\flat \}, \big(\phi_0^{(1)}\big)^\flat \}.\]
Again, by the equality $(\phi_0^{(r)})^{\flat}=\overline{\psi}^{(r)}_0$ for $r=1,2$, we get $\chi_E(f)^{(d+3)}=\chi_B(f)^{(d+3)}$, which completes the proof.
\end{proof}

\begin{remark}\label{rmk:diff}
The comparison between $\chi_B^{(d+4)}(f)$ and $\chi_E^{(d+4)}(f)$ is more complicated because for $r\geq 4$, the $\phi_0$ does not satisfy the Witt identity \eqref{eqn:formula-psi0}. 
However, a straightforward calculation gives 
\begin{equation}
\chi_E(f)^{(d+4)}-\chi_B(f)^{(d+4)}=\frac{1}{240}\{f,Q_4\} \neq 0
\end{equation}
for any $f\in \gr^{(r)}\ls_\cQ$, so that in particular $\chi_B$ and $\chi_E$ differ in general. 
Here $Q_4$ is defined by
\begin{equation}\label{eq:Q_4}
Q_4(x_1,\ldots,x_4)=\sum_{i \in \Z/5\Z}\frac{1}{(x_{i+1}-x_i)(x_{i+3}-x_i)(x_{i+3}-x_{i+2})(x_{i+4}-x_i)}
\end{equation}
with $x_0=0$, where the sum is over all cyclic permutations of the set $\{0,1,\ldots,4\}$. The element $Q_4$, which is a solution to the linearized double shuffle equations, will play a new role in the study of solutions to the double shuffle equations (see also \cite[Remark 14.10]{Brown:Anatomy}). 
\end{remark}

\subsection{A future work}

It is natural to ask if there are more isomorphisms between $\ls_\cQ$ and $\dm_\cQ$. 
We omit the details, but the discussions in \S3.4 and \S3.5 can be applied to the linearized Ihara action $\cp$: the group $(\cG,\circ)$ acts on the Lie algebra $(\cL,\{,\})$ on the right via the adjoint action $\Ad:\cG\times \cL \rightarrow \cL$, where $\circ$ is the so-called Ihara group law (see e.g. \cite[\S2.2]{Brown:depthgraded}).
The adjoint action in this case is given for $G\in \cG$ and $f\in\cL$ by the formula
\[\Ad(G)(f)= \left(\sum_{n=0}^{\infty}\ad^n(\psi_G)(f) \right),\]
where we set $\psi_G=\log(G)$. 
Note that each $\Ad(G)$ is a Lie isomorphism with respect to the Ihara bracket.
We can restrict to $\Ad(G): \ls_\cQ\rightarrow \cL$ and ask for conditions on $G$ such that its image is $\dm_\cQ$. The prototype is of course Ecalle's isomorphism $\Ad_{\rm ari}(P): \ls_\cQ^\sharp \rightarrow \dm_\cQ^\sharp$ in Theorem \ref{thm:Ecalle's-fundamental}, where $P$ is weight $0$ and satisfies the equations \eqref{eqn:dshmodlowerdepth}.

In a similar vein, one can try to write down an explicit element $B\in\cG$ of weight 0 such that 
\[ \chi_B(f)=\Ad(B)(f).\]
We make a first step in this direction. Define $\psi_B \in \cQ/\cQ^{(6)}$ by $\psi_B^{(0)}=1$ and 
\[
\psi_B^{(1)}=-\frac 12\psi_0^{(1)}, \quad \psi_B^{(2)}=-\frac 14\psi_0^{(2)}, \quad \psi_B^{(3)}=-\frac 18\psi_0^{(3)}, \quad \psi_B^{(4)}=-\frac{1}{18}\psi_0^{(4)}, \quad \psi_B^{(5)}=-\frac{11}{576}\psi_0^{(5)}.
\]
Note that $\psi_B^{(r)}=\big(\phi^{(r)}_0\big)^\flat$ for $r\leq 3$, but not for $r\geq 4$.
\begin{proposition}\label{prop:observation}
For $f \in \gr^{(d)}\ls_\cQ$, we have
\[\chi_B(f)\equiv \left(\sum_{n=0}^{\infty}\ad^n(\psi_B)(f) \right) \mod\cQ^{(d+5)}.\]
Furthermore, the element $B:=\exp(\psi_B)$ satisfies the equation
\begin{equation}\label{eqn:linstuffle}
\begin{gathered}
B(\x_1\cdots\x_i \sh \x_{i+1}\cdots \x_r)=B(\x_1\cdots\x_i)B(\x_{i+1}\cdots\x_r),\\
B^\sharp(\x_1\cdots\x_i \sh \x_{i+1}\cdots \x_r)=B^\sharp(\x_1\cdots\x_i)B^\sharp(\x_{i+1}\cdots\x_r)
\end{gathered}
\end{equation}
for $0\le i\le r\le 5$, where the $\exp$ is defined by replacing $\cp_{\rm ari}$ with $\cp$ in Definition \ref{dfn:expari}.
\end{proposition}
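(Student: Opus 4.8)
The plan is to prove the two assertions separately; each reduces to a finite computation, but I would arrange things so that the low-depth part is forced by structure already available and only the top two depths need to be checked by hand.

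\emph{The $\Ad(B)$-formula for $\chi_B$.}
Fix $f=f^{(d)}\in\gr^{(d)}\ls_\cQ$. First I would unwind the recursion of Definition~\ref{dfn:constructionChiE} to express, for $0\le r\le 4$, the component $\chi_B(f)^{(d+r)}$ as an explicit $\Q$-linear combination of iterated Ihara brackets of $f$ with $\psi_0^{(1)},\dots,\psi_0^{(r)}$, and then use the Witt identity \eqref{eqn:formula-psi0} — which holds for $\psi_0$ in \emph{all} depths $\ge 3$ — together with the Jacobi identity to bring each of these to a normal form involving only $f$, $\psi_0^{(1)}$ and $\psi_0^{(2)}$. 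On the other side I would expand $\Ad(B)(f)=\sum_{n\ge 0}\tfrac1{n!}\ad^n(\psi_B)(f)$ in depths $d,\dots,d+4$, substitute $\psi_B^{(r)}=c_r\psi_0^{(r)}$ with $(c_1,c_2,c_3,c_4)=(-\tfrac12,-\tfrac14,-\tfrac18,-\tfrac1{18})$, and normalise the same way. For $r\le 3$ one has $\psi_B^{(r)}=(\phi_0^{(r)})^\flat=-\tfrac1{2^r}\psi_0^{(r)}$, and the comparison is then identical to the one already carried out in the proof of Theorem~\ref{thm:main}; for $r=4$ one compares coefficients and checks that $c_4=-\tfrac1{18}$ is the value making $\chi_B(f)^{(d+4)}$ and $[\Ad(B)(f)]^{(d+4)}$ agree. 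It is exactly here that $\chi_B$ parts company with $\chi_E$, since $\phi_0^{(4)}$ violates the Witt identity (cf.\ Remark~\ref{rmk:diff}), and note that $\psi_B^{(5)}$ never enters, as only depths $\le d+4$ are involved.

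\emph{The equations \eqref{eqn:linstuffle}: the $B^\sharp$ half.}
This I would treat conceptually. By \cite[Theorem 14.2]{Brown:Anatomy}, $\psi_0\in\dm_\cQ$, so $\psi_0^\sharp$ is shuffle-alternal and, by Lemma~\ref{lem:anti}, lies in $\cV$; since $\psi_B^{(r)}=c_r\psi_0^{(r)}$ with scalar $c_r$ and $\cV$ is a graded subspace, $\psi_B^\sharp\in\cV$ and $\psi_B^\sharp(\x_1\cdots\x_i\sh\x_{i+1}\cdots\x_r)=c_r\,\psi_0^\sharp(\x_1\cdots\x_i\sh\x_{i+1}\cdots\x_r)=0$. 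Applying the identity \eqref{eqn:stronger} of Proposition~\ref{prop:V} (whose first argument may be taken in $\cV$) gives, by induction on $n$, $\bigl((\psi_B^\sharp)_{\cp_{\rm ari}}^n\bigr)^\flat=(\psi_B)_{\cp}^n$, hence $B=\exp(\psi_B)=\bigl(\exp_{\rm ari}(\psi_B^\sharp)\bigr)^\flat$ and therefore $B^\sharp=\exp_{\rm ari}(\psi_B^\sharp)$. Since $\exp_{\rm ari}$ restricts to a bijection from shuffle-alternal moulds onto shuffle-symmetral ones (cf.\ \cite[Proposition 2.6.1]{Schneps:ARI}), $B^\sharp$ is shuffle-symmetral, which is the second family of equations in \eqref{eqn:linstuffle}.

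\emph{The equations \eqref{eqn:linstuffle}: the $B$ half, and the main obstacle.}
For the first family the same reasoning reaches only depth $3$: since $\psi_B^{(r)}=(\phi_0^{(r)})^\flat$ for $r\le 3$ and $\phi_0\in\cV$, the argument above (now with $\phi_0$ in place of $\psi_B^\sharp$) gives $\exp(\phi_0^\flat)=\bigl(\exp_{\rm ari}(\phi_0)\bigr)^\flat=P^\flat$, hence $B\equiv P^\flat\pmod{\cQ^{(4)}}$, and $P^\flat$ satisfies the second identity of \eqref{eqn:dshmodlowerdepth}; the cases $i=0$ and $i=r$ are trivial because $B^{(0)}=1$. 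In depths $4$ and $5$ the scalars $c_4=-\tfrac1{18}$ and $c_5=-\tfrac{11}{576}$ no longer agree with those of $\phi_0^\flat$, and I see no way to avoid a direct verification: compute $B^{(4)}$ and $B^{(5)}$ from $\psi_B^{(1)},\dots,\psi_B^{(5)}$ via the $\cp$-exponential (Definition~\ref{dfn:expari} with $\cp_{\rm ari}$ replaced by $\cp$) — keeping in mind that, because $\cp$ is not associative, $B^{(r)}$ must be assembled from the right-nested powers $(\psi_B)_{\cp}^n$ — substitute the explicit rational functions of \eqref{eqn:psi_0} and its continuation, and check the identities $B(\x_1\cdots\x_i\sh\x_{i+1}\cdots\x_r)=B(\x_1\cdots\x_i)B(\x_{i+1}\cdots\x_r)$ for $1\le i<r$, $r\in\{4,5\}$; the values $-\tfrac1{18}$ and $-\tfrac{11}{576}$ are precisely those that make these partial-fraction identities close up. This last step — a rational-function computation in up to five variables — is the main obstacle, and in practice it would be done with the aid of computer algebra. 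It is conceivable that it could instead be deduced from a characterisation of the shuffle-multiplicative $\cp$-exponentials in the spirit of \cite[\S14]{Brown:Anatomy}, but I would not attempt that here.
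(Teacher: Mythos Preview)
The paper does not supply a proof of this proposition: it sits in the ``future work'' subsection \S5.4 and is presented as a computational observation, so there is nothing to compare against beyond the implicit ``check by direct calculation''. Your plan is sound and in fact more economical than brute force, since you use the proof of Theorem~\ref{thm:main} to dispose of depths $\le d+3$ in the first assertion and the identity $B\equiv P^\flat\pmod{\cQ^{(4)}}$ together with \eqref{eqn:dshmodlowerdepth} to dispose of depths $\le 3$ in the $B$-half of \eqref{eqn:linstuffle}, leaving only the genuinely new depths $4$ and $5$ to be verified explicitly.

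Two small remarks. First, your conceptual argument for the $B^\sharp$-half is correct, but it rests on \cite[Theorem~14.2]{Brown:Anatomy} (that $\psi_0\in\dm_\cQ$), which is itself only announced; since you only need the shuffle-alternality of $\psi_0^\sharp$ in depths $\le 5$, you could instead verify this directly from the explicit formulas and keep the argument self-contained. Second, the induction you run on \eqref{eqn:stronger} to obtain $B^\sharp=\exp_{\rm ari}(\psi_B^\sharp)$ is exactly right --- note only that \eqref{eqn:stronger} appears inside the proof of Proposition~\ref{prop:V} rather than in its statement, so you are invoking that stronger intermediate identity. With these caveats your outline would constitute a complete proof once the depth-$4$ and depth-$5$ rational-function identities are carried out.
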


Since Brown's lift $\chi_B: \ls_\cQ\rightarrow \dm_\cQ$ is expected to be a Lie isomorphism, Proposition \ref{prop:observation} may provide necessary conditions on $G$ (namely \eqref{eqn:linstuffle}) such that the adjoint action $\Ad(G)$ induces a Lie isomorphism $\Ad(G): \ls_\cQ\rightarrow \dm_\cQ$. However, \eqref{eqn:linstuffle} will not be sufficient: the unit element $\mathbf{1}=(1,0,0,\ldots) \in \cG$ satisfies \eqref{eqn:linstuffle} but $\Ad(\mathbf{1})$ is the identity.

\section{Anatomical decomposition of $\sigma_{2k+1}$}

In \cite{Brown:Anatomy}, Brown gave anatomical decompositions of $\sigma_3,\sigma_5,\sigma_7,\sigma_9$ with elements $\psi_{2k+1}$ (see \cite[\S 11.4]{Brown:Anatomy}) and with lifting elements $\xi_{2k+1}=\chi_B(x_1^{2k})$ (see \cite[\S14.5]{Brown:Anatomy}). 
In this section, we give anatomical decompositions of Ecalle's polar solutions which are not in Brown's list.

Let $\eta_{2k+1} = \chi_E(x_1^{2k})$ and write $\{f_1,f_2,f_3\}=\{f_1,\{f_2,f_3\}\}$ (in general, we define $\{f_1,\ldots,f_n\}=\{f_1,\{f_2,\ldots,f_n\}\}$ inductively).
Then the results are as follows.
\begin{align*}
\sigma_3 &\equiv \eta_3 \mod \cQ^{(3)},\\
\sigma_5 &\equiv \eta_5 -\frac{5}{24} \{\eta_3,\eta_3,\eta_{-1}\} \mod \cQ^{(5)},\\
\sigma_7 &\equiv \eta_7 - \frac{7}{96} \{\eta_3,\eta_5,\eta_{-1}\} - \frac{7}{48} \{\eta_5,\eta_3,\eta_{-1}\} \\
&+\frac{37}{86400}\{ \eta_{-1},\eta_{-1},\eta_{-1},\eta_3,\eta_7\}+ \frac{3}{3200} \{\eta_{-1},\eta_{-1},\eta_3,\eta_7,\eta_{-1}\}\\
& + \frac{1}{1920} \{\eta_{-1},\eta_3,\eta_{-1},\eta_{7},\eta_{-1}\} -\frac{1}{2304}\{\eta_{-1},\eta_{-1},\eta_5 ,\eta_5,\eta_{-1}\} \\
& +\frac{5}{6912}\{\eta_5,\eta_{-1},\eta_{-1},\eta_5,\eta_{-1}\}-  \frac{661}{14400}\{\eta_3,\eta_{-1},\eta_3,\eta_3,\eta_{-1}\}\\
&+\frac{661}{28800} \{\eta_{-1},\eta_3,\eta_3,\eta_3,\eta_{-1}\}  \mod \cQ^{(7)},\\
\sigma_9&\equiv \eta_9 -\frac{5}{36}\{\eta_7,\eta_3,\eta_{-1}\} - \frac{7}{144} \{\eta_5,\eta_5,\eta_{-1}\} - \frac{5}{108}\{\eta_3,\eta_7,\eta_{-1}\} \mod \cQ^{(5)}.
\end{align*}

\end{document}